\newtheorem{thm}{Theorem}[section]
\newtheorem{lem}[thm]{Lemma}
\newtheorem{prop}[thm]{Proposition}
\newtheorem{rem}[thm]{Remark}
\title[Induced Representation of groupoids]{Induced Representation of Topological groupoids}
\author[K. N. Sridharan]{K. N. Sridharan}
\address{K. N. Sridharan,\newline\indent Department of Mathematics,\newline\indent Indian Institute of Technology Delhi,\newline\indent New Delhi - 110016, India.}
\email{sreedu242@gmail.com}
\author[N. S. Kumar]{N. Shravan Kumar}
\address{N. Shravan Kumar,\newline\indent Department of Mathematics,\newline\indent Indian Institute of Technology Delhi,\newline\indent New Delhi - 110016, India.}
\email{shravankumar.nageswaran@gmail.com}
\begin{document}
\begin{abstract}
    Let $G$ be a locally compact second countable groupoid with a Haar system. In this article, we introduce the induced representation of $G$ from a continuous unitary representation of a closed wide subgroupoid $H$ with a Haarsystem provided there exists a full equivariant system of measures $\mu=\{\mu^{u}\}_{u\in G^{0}}$ on $G/H$. We prove some basic properties of induced representation and a theorem on induction in stages. A groupoid version of Mackey's tensor product theorem is also provided.  We also prove a groupoid version of Frobenius Reciprocity theorem on compact transitive groupoids.
\end{abstract}
\keywords{Groupoids, Induced representation, Morita equivalence, Hilbert modules}
\subjclass{Primary 18B40, 22A30; Secondary 46L08}
\maketitle
\section{Introduction}
The concept of induced representation plays an important role in the representation theory. Frobenius first developed it during the development of the representation theory of finite groups. Later it was introduced in locally compact groups by Mackey and developed the theory through a series of papers \cite{mackey1952induced,mackey1953induced,mackey1949imprimitivity,mackey1951induced}. In \cite{mackey1952induced}, Mackey considered separable groups with the inducing representation on a separable Hilbert space. The construction of induced representation is done using the quasi-invariant measure on the corresponding homogeneous space. In \cite{blattner1961induced}, the theory is generalized to non-separable cases where such a measure is not always guaranteed. More details on the induced representation of locally compact groups can also be referred to in \cite{kaniuth2013induced}.

The theory of induced representation has also been developed for general structures such as hypergroups\cite{hermann1992induced} and measured groupoids \cite{renault2006groupoid}. In \cite{hermann1992induced}, the induced representation is constructed from a certain class of representations of closed subhypergroup called inducible representations. The representation of the measured groupoid is over a measurable Hilbert bundle over unit space $G^{0}$. Here, the induced representation is introduced similarly to Rieffel's version of induced representations in groups \cite{rieffel1974induced} from a closed subgroupoid satisfying the hypothesis in  \cite[Theorem $1.21$]{renault2006groupoid}.  Later, in \cite{MR2465655}, a general theory is formalized using the generalized version of Renault's Disintegration theorem \cite[Theorem $7.8$]{MR2547343}. The paper \cite{MR2465655} also shows that the representation of a second countable locally compact Hausdorff groupoid induced from an irreducible representation of a stability group is irreducible. 

This paper introduces the induced representation of topological groupoids $G$ from a continuous unitary representation of a closed wide subgroupoid $H$ over a continuous field of Hilbert space. Here, we assume the existence of a family of full equivariant measure systems over $G/H$. It is parallel to the notion of $G$- invariant measure on the homogeneous space $G/H$ in the group context.  Similar to groups, $G/H$ forms a left $G$-space by the left multiplication of the groupoid on the cosets. The existence of such a measure system is guaranteed in cases, for instance, when $G/H$ is a proper $G$-space. We use the equivalence of Hilbert modules and a continuous field of Hilbert spaces to obtain the results. The construction of the induced representation is almost similar to what is done in the group case. As we know, in the group, the induced representations obtained with respect to two different quasi-invariant measures are unitarily equivalent \cite[Theorem 2.1]{mackey1952induced}. Unlike groups, here the induced representation depends on the equivariant measure system. So, we assume and fix an equivariant measure system on $G/H$ and construct the induced representation with that measure system.

In Section $2$, we discuss basic results related to groupoids and their representation. In Section $3$, we show the construction of the induced representation and prove some basic properties. In Section $4$, we prove what is known as Induction in Stages. We show that if  $K$ and $H$ are closed wide subgroupoids of $G$ such that $K\subseteq H$, and $\sigma$  a continuous unitary representation of $K$, then the representations $\left(ind_{K}^{G}(\sigma), \gamma \right)$ and $\left(ind_{H}^{G}\left(ind_{K}^{H}(\sigma),\mu_{H}\right),\mu_{G}\right)$ are unitarily equivalent, where $\gamma$ is a full equivariant measure system over $G/K$ obtained using the measure systems $\mu_{H}$ and $\mu_{G}$ over $H/K$ and $G/H$ respectively. We focus on the outer tensor product of representations of groupoids, and the well-known Mackey's tensor product theorem on groupoids is proved in Section $5$ using the tensor product of Hilbert modules. For two groupoids $G_{1}$ and $G_{2}$ and two representations $\sigma_{1}$ and $\sigma_{2}$ of the closed wide subgroupoids $H_{1}$ and $H_{2}$  respectively, we show that representations $\left(ind_{H_{1}}^{G_{1}}(\sigma_{1}),\mu_{1}\right) \times \left(ind_{H_{2}}^{G_{2}}(\sigma_{2}),\mu_{2} \right)$ and $\left(ind_{H_{1}\times H_{2}}^{G_{1}\times G_{2}}(\sigma_{1}\times \sigma_{2}),\mu_{1}\times \mu_{2}\right)$ are unitarily equivalent. Frobenius reciprocity theorem is an important theorem in the theory of induced representation. Various authors have different versions of Frobenius reciprocity theorems on groups\cite{mackey1953induced,mautner1951generalization, Moore1962OnTF}. The hypergroup version of Frobenius Reciprocity theorem for compact hypergroups can be referred to in \cite[Theorem $4$]{hermann1992induced}. In Section $6$, we prove a version of the Frobenius Reciprocity theorem on compact transitive groupoids. Here, we show that the family of intertwining operators $Mor(\pi,\left(ind_{H}^{G}(\sigma),\mu \right))$ and $Mor(\pi_{|_{H}},\sigma)$  are isomorphic as vector space. 

\section{Preliminaries}
Let's start with some basics on groupoids and their representations.

A groupoid is a set $G$ endowed with a product map $G^{2}\to G: (x,y)\to xy$, where  $G^{2}$ is a subset of $G\times G$ called the set of composable pairs, and an inverse map  $G\to G:x\to x^{-1}$ such that the following relations are satisfied: 
    \begin{enumerate}[(i)]
        \item $(x^{-1})^{-1}=x$,
        \item $(x,y),(y,z)\in G^{2}$ implies $(xy,z),(x,yz)\in G^{2}$ and $(xy)z=x(yz)$,
        \item $(x^{-1},x)\in G^{2}$ and if $(x,y)\in G^{2}$, then $x^{-1}(xy)=y$,
        \item $(x,x^{-1})\in G^{2}$ and if $(z,x)\in G^{2}$, then $(zx)x^{-1}=z.$
    \end{enumerate}
  If $x \in G,~ d,r: G\to G$, defined as $d(x) = x^{-1}x$ and $r(x)= xx^{-1}$ are its  domain and range maps respectively. The image of range and domain maps denoted $G^{0}$ is called the unit space of $G$. For $u \in G^{0}, G^{u} = r^{-1}\{u\}$ and $G_{u} = d^{-1}\{u\}$. The set $G^{u}_{v}=G^{u}\cap G_{v}$ and for every $u\in G^{0}$, $G^{u}_{u}$ has a group structure with identity $u$, known as isotropy subgroup at $u$.
  
    A topological groupoid consists of a groupoid $G$ and a topology compatible with the groupoid structure such that:
    \begin{enumerate}[(i)]
        \item $x\to x^{-1}:G\to G$ is continuous,
        \item$(x,y)\to xy: G^{2}\to G$ is continuous where $G^{2}$ has the induced topology from $G \times G$. 
    \end{enumerate}
     Here we consider second countable locally compact Hausdorff groupoids. The unit space $G^{0}$ is a locally compact Hausdorff space under the subspace topology. Both range and domain maps are continuous.
     
     The notion of haarsystem of topological groupoids is analogous to the haar measure in groups and we assume the existence of haarsystem. The following is the definition of haarsystem.
     
     A left Haar system for $G$ consists of a family  $\{\lambda^{u}: u\in G^{0}\}$ of positive radon measures on $G$ such that, 
\begin{enumerate}[(i)]
    \item the support of the measure $\lambda^{u}$ is $G^{u}$,
    \item for any $f\in C_{c}(G),u\to \lambda(f)(u):= \int f d\lambda^{u}$ is continuous and
    \item for any $x\in G$ and $f \in C_{c}(G)$,\[\int_{G^{d(x)}}f(xy)d\lambda^{d(x)}(y)=\int_{G^{r(x)}}f(y)d\lambda^{r(x)}(y).\]  
\end{enumerate}
According to \cite[Proposition $2.4$]{renault2006groupoid}, if $G$ is a locally compact groupoid with a left haar system, then the range map $r$ is an open map. More details on groupoid can be referred to\cite{renault2006groupoid,paterson2012groupoids}

Now we define the left action of a groupoid.

   Suppose  $G$ is a topological groupoid and  $X$ is a locally compact Hausdorff space together with a continuous map $r_{X}: X \to G^{0}$ called the moment map. Then a left action of $G$ on $X$ is a continuous map $(\gamma,x) \to \gamma\cdot x$ from $G*X= \{(\gamma,x) \in G\times X : s(\gamma) = r_{X}(x) \}$ to X such that
   \begin{enumerate}[(i)]
       \item $r_{X}(x) \cdot x= x~~ \forall x \in X$ and
       \item if $(\gamma,\eta) \in  G^{2}$ and $(\eta,x) \in  G*X$, then $(\gamma,\eta \cdot x) \in  G*X$ and $\gamma \eta \cdot x=\gamma \cdot (\eta \cdot x)$.
   \end{enumerate} 
   $X$ is called left $G$-space.

If $Y$ and $X$ are both left $G$-spaces and $\pi$ is equivariant, i.e. $\pi(g\cdot y)= g\cdot \pi(y)$, then  a fully equivariant $\pi$-
system $\beta$ is a family of measures $\{\beta^{x}: x\in X\}$ on $Y$ such that
\begin{enumerate}[(i)]
    \item $supp(\beta^{x})= \pi^{-1}(x)$
    \item $x\to \beta(f)(x)= \int_{Y} f(y)d\beta^{x}(y)$ is continuous for $f\in C_{c}(Y)$
    \item For $f\in C_{c}(Y) , (\gamma,x)\in G*X$,
    $$ \int_{Y} f(\gamma \cdot y)d\beta^{x}(y)=\int_{Y} f(y)d\beta^{\gamma \cdot x}(y)$$
\end{enumerate} 
The left haarsystem of a groupoid is an example of the fully equivariant $\pi$-system.

Now we define the continuous representation of groupoids over a continuous filed of Hilbert spaces.
 A continuous field of Hilbert spaces over $G^{0}$ is a  family $\{\mathcal{H}_{u}\}_{u\in G^{0}}$ of Hilbert spaces, with a set $\Gamma\subset \prod_{u\in G^{0}}\mathcal{H}_{u}$ of vector fields such  that: 
\begin{enumerate}[(i)]
    \item $\Gamma$ is a complex linear subspace of $\prod_{u\in G^{0}}\mathcal{H}_u$.
    \item For every $u\in G^{0}$, the set $\xi(u)$ for $\xi \in \Gamma$ is dense in $\mathcal{H}_{u}$.
    \item For every $\xi \in \Gamma$, the function $u \to \|\xi(u)\|$ is continuous.
    \item Let $\xi \in \prod_{u\in G^{0}}\mathcal{H}_{u}$ be a vector field; if for every $u\in G^{0}$ and every $\epsilon >0$, there exists an $\xi'\in \Gamma$ such that $\|\xi(s) -\xi'(s)\| < \epsilon$ on a neighbourhood of $u$, then $\xi \in \Gamma$.
\end{enumerate}

       For a continuous field of Hilbert spaces, we can define a topology on $\mathcal{H}= \sqcup_{u\in G^{0}}H_{u}$, generated by the sets of the form 
\[U(V,\xi,\epsilon)=\{h\in \mathcal{E}:\|h-\xi(p(h))\|<\epsilon,\xi\in \Gamma,p(h)\in V\}.\]
    where $V$ is an open set in $G^{0}$, $\epsilon> 0$, and $p: \mathcal{H}\to G^{0}$ is the projection of the total  space $\mathcal{H}$ to base space $G^{0}$ such that fiber $p^{-1}(u) = \mathcal{H}_{u},u \in G^{0}$.  This map is a surjective continuous open map under the above topology. We denote such continuous field of Hilbert space as $(\mathcal{H},\Gamma)$.  With the above topology, it forms the structure of a continuous Hilbert bundle, and $\Gamma$ forms the continuous sections.  We denote the space of continuous sections vanishing at infinity as $C_{0}(G^{0},\mathcal{H})$. Since $G^{0}$ is locally compact, we can see that  $C_{0}(G^{0},\mathcal{H})$ is fibrewise dense and forms a Banach space in $\Gamma$ under supremum norm. According to \cite[Remark $13.19$]{fell1988representations}, a continuous Hilbert bundle over $G^{0}$ has enough continuous sections, i.e for every $b\in \mathcal{H}$, there exist a continuous section $\alpha$ such that $\alpha(p(b))=b$. A subset $\mathcal{K}$ of $\mathcal{H}$ is a subbundle if $p_{|_{\mathcal{K}}}$ is continuous onto open map and each fibre of $p^{-1}(u)\cap \mathcal{K}$ is a closed subspace of $\mathcal{H}$. The topology of $\mathcal{K}$ is inherited from $\mathcal{H}$. For more details, refer \cite{fell1988representations,gierz2006bundles,bos2011continuous}.
    
    A  continuous  representation of groupoid $G$ is a double $(\mathcal{H}^{\pi},\pi)$, where $\mathcal{H}^{\pi}=\{\mathcal{H}^{\pi}_{u}\}_{u\in G^{0}}$ is a continuous Hilbert bundle over $G^{0}$ such that  
\begin{enumerate}[(i)]
    \item  $\pi(x) \in \mathcal{B}\left(\mathcal{H}^{\pi}_{d(x)}, \mathcal{H}^{\pi}_{r(x)}\right)$ is a unitary operator, for each $x \in G$,

\item $\pi(u)$ is the identity map on $\mathcal{H}^{\pi}_{u}$ for all $u \in G^{0}$,

\item $\pi(x) \pi(y)=\pi(x y)$ for all $(x, y) \in G^{2}$,

\item $\pi(x)^{-1}=\pi\left(x^{-1}\right)$ for all $x \in G$,

\item $x \rightarrow \langle \pi(x) \xi(d(x)),\eta(r(x))\rangle$ is continuous for every $\eta,\xi \in C_{0}(G^{0},\mathcal{H}^{\pi})$.
\end{enumerate}
Suppose $(\mathcal{H}^{\pi},\pi)$ and $(\mathcal{H}^{\pi'},\pi')$ be two continuous representations of groupoid $G$, then $Mor(\pi,\pi')$ refers to the bundle morphisms $T:\mathcal{H}^{\pi} \to \mathcal{H}^{\pi'}$ such that 
\[ T_{r(x)}\pi(x)=\pi'(x)T_{d(x)},~~ x\in G.\]
Note that $Mor(\pi,\pi')$ is a vector space.

We say that two representations $\pi$ and $\pi'$ are unitarily equivalent, denoted $\pi \sim \pi'$, if there exist $T \in Mor(\pi,\pi')$ such that each $T_{u}$ is unitary operator. A representation $(\mathcal{H}^{\pi},\pi)$ is called reducible if there is a subbundle $\mathcal{K}$ invariant under $\pi$, i.e for every $u$, $\mathcal{K}_{u}$ is a closed subspace of $\mathcal{H}^{\pi}_{u}$ and $\pi(x)\mathcal{K}_{d(x)} \subseteq \mathcal{K}_{r(x)}$. It is irreducible if it is not reducible.

\section{Construction of Induced representation}
Let $H$ be a wide closed subgroupoid of a second countable locally compact groupoid $G$ with haar system $\{\lambda_{H}^{u}\}_{u\in G^{0}}$. We can easily verify that $G/H$ is a left $G$-space.  By the arguments similar to \cite[Proposition $2.1$]{renault2006groupoid}, $G/H$ is a locally compact Hausdorff space under quotient topology with quotient map $q_{H}$ and the moment map $r_{G^{0}}: G/H \to G^{0}, r_{G^{0}}(gH)=r(g)$ is an open continuous map. We assume and fix  a full equivariant $r_{G^{0}}$- system, $\mu=\{\mu^{u}\}_{u\in G^{0}}$ on the left $G$- space $G/H$.

For $f\in C_{c}(G)$, we can define a function $Pf$ in $C_{c}(G/H)$, such that
$$ Pf(xH)=\int_{H}f(x\xi)d\lambda_{H}^{d(x)}(\xi).$$
It is well defined, due to the invariance of haar system $\{\lambda^{u}_{H}\}_{u\in G^{0}}$ and $supp(Pf) \subset q(supp(f))$. The continuity of the function can be observed using \cite[Lemma $3.12$]{bos2011continuous}.
Now, we prove an important lemma which is later being used in this paper.
\begin{lem}
    If $J \subset G/H$ is compact, there exists $f\geq 0$ in $C_{c}(G)$ such that $Pf=1$ on $J$.
\end{lem}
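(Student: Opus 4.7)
The plan is to adapt the classical group-theoretic argument to the groupoid setting, exploiting the fact that $P:C_c(G)\to C_c(G/H)$ has enough range to locally produce strictly positive values, and then to use a partition-of-unity/normalization trick on $G/H$.

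First I would show the pointwise surjectivity onto the positive cone: for any coset $xH\in G/H$, choose a representative $x\in G$ and, by Urysohn on the locally compact Hausdorff space $G$, find $g_{x}\in C_{c}(G)$ with $g_{x}\geq 0$ and $g_{x}(x)>0$. Because $\lambda_{H}^{d(x)}$ has full support $H^{d(x)}$ and contains the unit $d(x)$, continuity of $g_{x}$ gives
\[
Pg_{x}(xH)=\int_{H^{d(x)}}g_{x}(x\xi)\,d\lambda_{H}^{d(x)}(\xi)>0.
\]
Since $Pg_{x}\in C_{c}(G/H)$, the set $V_{x}:=\{yH:Pg_{x}(yH)>0\}$ is an open neighbourhood of $xH$ in $G/H$.

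Next, using compactness of $J$, I would extract a finite subcover $V_{x_{1}},\dots,V_{x_{n}}$ and put $g:=\sum_{i=1}^{n}g_{x_{i}}\in C_{c}(G)$. Then $g\geq 0$ and the open set $V:=\{yH:Pg(yH)>0\}$ contains $J$. By Urysohn applied to the locally compact Hausdorff space $G/H$, pick $\phi\in C_{c}(G/H)$ with $0\leq\phi\leq 1$, $\phi\equiv 1$ on $J$ and $\operatorname{supp}\phi\subset V$. Define
\[
\psi(yH):=\begin{cases}\phi(yH)/Pg(yH),& yH\in V,\\0,&\text{otherwise,}\end{cases}
\]
which lies in $C_{c}(G/H)$ because the apparent singularity is killed by the support condition on $\phi$.

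Finally I would set $f(y):=\psi(q_{H}(y))\,g(y)$. Continuity and compact support of $f$ are immediate from $g\in C_{c}(G)$ and continuity of $\psi\circ q_{H}$; positivity is clear. Using the invariance $\psi(y\xi H)=\psi(yH)$ for $\xi\in H$, one computes
\[
Pf(yH)=\int_{H}\psi(yH)\,g(y\xi)\,d\lambda_{H}^{d(y)}(\xi)=\psi(yH)\,Pg(yH)=\phi(yH),
\]
so $Pf\equiv 1$ on $J$, as required. The only delicate point is verifying that $\psi$ is genuinely continuous on $G/H$ (in particular on $\partial V$), which follows because $\phi$ vanishes on a neighbourhood of the complement of $V$ where $Pg$ might approach zero; everything else reduces to routine checks of continuity, support and the invariance of the Haar system on $H$.
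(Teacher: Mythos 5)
Your proof is correct and rests on the same normalization trick as the paper's: choose $\phi\in C_{c}(G/H)$ with $\phi\equiv 1$ on $J$ and set $f=\bigl((\phi/Pg)\circ q_{H}\bigr)\,g$ for a nonnegative $g\in C_{c}(G)$ with $Pg>0$ on $\operatorname{supp}(\phi)$. The only divergence is in how $g$ is produced: the paper lifts a compact neighbourhood $E$ of $J$ to a compact $K\subset G$ with $q_{H}(K)=E$ and takes $g>0$ on $K$, whereas you obtain $g$ from a pointwise-positivity argument (using that $\lambda_{H}^{d(x)}$ has full support and that $H$ contains the units) followed by a finite subcover --- a slightly more self-contained route, since it avoids the unproved (though standard) fact that compact subsets of $G/H$ lift to compact subsets of $G$ under the open surjection $q_{H}$.
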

\begin{proof}
    Let $E$ be a compact neighbourhood of $J$ in $G/H$. There exist a compact $K\subset G$ such that $q_{H}(K)=E$. Choose a non negative $g\in C_{c}(G)$ with $g>0$ on $K$ and $\phi \in C_{c}(G/H)$ supported in $E$ such that $\phi =1$ on $J$. Define a continuous function $$ f= \frac{\phi \circ q_{H}}{Pg\circ q_{H}}g.$$ Here $f$ is continuous since $Pg> 0$ on $supp(\phi)$, its support is contained in $supp (g)$ and $Pf = \phi$. 
\end{proof}
 Suppose $(\mathcal{H}^{\sigma},\sigma)$ is a continuous unitary representation of $H$.  Let $C(G,\mathcal {H}^{\sigma})$ denotes the set of  continuous function $f$ such that $f(x) \in \mathcal{H}^{\sigma}_{d(x)}$ and its subspace $C_{c}(G,\mathcal {H}^{\sigma})$ be the functions with compact support. For $f\in C(G,\mathcal {H}^{\sigma}) $, we also denote $f_{|_{G^{u}}}$ as $f^{u}$.
Define,
\begin{align*}
  \mathcal{F}_{0}^{\sigma}(G)=\Bigg\{f\in C(G,\mathcal {H}^{\sigma}):~ & q_{H}(supp(f))~ \text{is compact}~ \text{and} \\&f(x\xi)=\sigma(\xi^{-1})f(x),\text{for}~ (x,\xi)\in G^{2},\xi \in H \Bigg \}. 
\end{align*}

The next proposition provides explicitly the structure of functions in $\mathcal{F}_{0}^{\sigma}(G)$.
\begin{prop}
    If $\alpha \in C_{c}(G,\mathcal {H}^{\sigma})$, then the function
    $$f_{\alpha}(x)= \int_{H} \sigma(\eta)\alpha(x\eta)d\lambda_{H}^{d(x)}(\eta)$$
    belongs to $\mathcal{F}_{0}^{\sigma}(G)$. Moreover, every element of $\mathcal{F}_{0}^{\sigma}(G)$ is of the form $f_{\alpha}$ for some $\alpha \in C_{c}(G,\mathcal{H}^{\sigma})$.
\end{prop}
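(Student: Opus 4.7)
The plan is to prove both directions separately. For the forward direction, fix $\alpha \in C_{c}(G,\mathcal{H}^{\sigma})$. First I would check that $f_{\alpha}(x)$ lies in the correct fiber: whenever $(x,\eta)\in G^{2}$ with $\eta \in H$ we have $d(x)=r(\eta)$, so $\sigma(\eta)$ sends $\alpha(x\eta)\in \mathcal{H}^{\sigma}_{d(\eta)}$ into $\mathcal{H}^{\sigma}_{r(\eta)}=\mathcal{H}^{\sigma}_{d(x)}$. The support condition is immediate from the observation that $f_{\alpha}(x)\neq 0$ requires $x\eta\in supp(\alpha)$ for some $\eta$, so $q_{H}(supp(f_{\alpha}))\subseteq q_{H}(supp(\alpha))$, which is compact. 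For the equivariance $f_{\alpha}(x\xi)=\sigma(\xi^{-1})f_{\alpha}(x)$ with $(x,\xi)\in G^{2}$ and $\xi\in H$, I would substitute $\eta\mapsto \xi^{-1}\eta$ and invoke the left invariance of $\{\lambda_{H}^{u}\}$, which turns $\lambda_{H}^{d(x\xi)}=\lambda_{H}^{d(\xi)}$ into $\lambda_{H}^{r(\xi)}=\lambda_{H}^{d(x)}$ while transforming $\sigma(\eta)\alpha(x\xi\eta)$ into $\sigma(\xi^{-1})\sigma(\eta)\alpha(x\eta)$; then $\sigma(\xi^{-1})$ pulls out of the integral. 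Continuity of $f_{\alpha}$ as a bundle section can be verified by reducing to continuity of the scalar functions $x\mapsto \langle f_{\alpha}(x),\zeta(d(x))\rangle$ for $\zeta \in C_{0}(G^{0},\mathcal{H}^{\sigma})$, invoking property (v) of $\sigma$ together with continuity of the Haar integral in a standard dominated-convergence argument over the compact support of $\alpha$.

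For the converse, given $f\in \mathcal{F}_{0}^{\sigma}(G)$, I would apply the preceding lemma to the compact set $J=q_{H}(supp(f))$ to produce $h\in C_{c}(G)$ with $h\geq 0$ and $Ph\equiv 1$ on $J$, and then set $\alpha(x)=h(x)f(x)$. Clearly $\alpha \in C_{c}(G,\mathcal{H}^{\sigma})$ since $supp(\alpha)\subseteq supp(h)$ is compact. The key computation uses the equivariance $\sigma(\eta)f(x\eta)=\sigma(\eta)\sigma(\eta^{-1})f(x)=f(x)$ together with the fact that $h(x\eta)$ is scalar:
\[ f_{\alpha}(x)=\int_{H}\sigma(\eta)h(x\eta)f(x\eta)\,d\lambda_{H}^{d(x)}(\eta)=f(x)\int_{H}h(x\eta)\,d\lambda_{H}^{d(x)}(\eta)=f(x)\,Ph(xH).\]
On $J$ this equals $f(x)$ since $Ph(xH)=1$, and off $J$ the coset $xH$ misses $supp(f)$; in particular $x=x\cdot d(x)\notin supp(f)$, so $f(x)=0$ and both sides vanish. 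Hence $f_{\alpha}=f$ everywhere.

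The main obstacle I anticipate is the rigorous continuity argument for $f_{\alpha}$ in the bundle topology, since $\sigma$ is continuous only in the weak sense through continuous sections and the integrand takes values in varying fibers. I would address this by testing against matrix coefficients with elements of $C_{0}(G^{0},\mathcal{H}^{\sigma})$ and exploiting the compactness of $supp(\alpha)$ to pass limits through $\lambda_{H}^{d(x)}$; the remaining verifications are direct applications of Haar invariance and of the lemma immediately preceding the proposition.
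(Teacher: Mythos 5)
Your overall strategy coincides with the paper's: the fiber check, the support estimate, the change of variables $\eta\mapsto\xi^{-1}\eta$ combined with left invariance of $\{\lambda_{H}^{u}\}$ for the equivariance, and, for surjectivity, applying the preceding lemma to $J=q_{H}(supp(f))$ and setting $\alpha=\Psi f$ so that $f_{\alpha}(x)=f(x)\,P\Psi(xH)=f(x)$. All of that is correct, including your treatment of the points with $xH\notin J$, where $x\notin supp(f)$ forces both sides to vanish.

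The one genuine gap is in the continuity argument, precisely the point you flagged as the main obstacle. Testing $f_{\alpha}$ only against matrix coefficients $x\mapsto\langle f_{\alpha}(x),\zeta(d(x))\rangle$ with $\zeta\in C_{0}(G^{0},\mathcal{H}^{\sigma})$ establishes only a weak form of continuity, and this does not suffice for continuity with respect to the bundle topology, whose basic open sets are the tubes $U(V,\xi,\epsilon)=\{h:\|h-\xi(p(h))\|<\epsilon\}$: membership of $f_{\alpha}(x)$ in such a tube is governed by $\|f_{\alpha}(x)-\xi(d(x))\|^{2}=\|f_{\alpha}(x)\|^{2}-2\,\mathrm{Re}\langle f_{\alpha}(x),\xi(d(x))\rangle+\|\xi(d(x))\|^{2}$, and the first term is not controlled by weak continuity alone (a weakly continuous selection can fail to be norm continuous). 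The paper closes this by additionally proving that $x\mapsto\|f_{\alpha}(x)\|^{2}$ is continuous, writing it as the double integral $\iint_{H\times H}\langle\sigma(\eta)\alpha(x\eta),\sigma(k)\alpha(xk)\rangle\,d\lambda_{H}^{d(x)}(k)\,d\lambda_{H}^{d(x)}(\eta)$ of a jointly continuous, compactly supported integrand and invoking the continuity of integration against the Haar system as in \cite[Lemma 3.12]{bos2011continuous}. Once you add norm continuity, your pairing argument against the fibrewise dense family $C_{0}(G^{0},\mathcal{H}^{\sigma})$ does combine with it to give continuity of $f_{\alpha}$, and the proof is complete.
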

 \begin{proof}
 For each $x\in G$, the above integral makes sense due to \cite[Lemma $3.4$,$3.5$]{bos2011continuous} and \cite[Appendix Theorem $A.20$]{folland2016course}.
     Also it is clear that $q_{H}(supp~ (f_{\alpha}))\subset q_{H}(supp~ (\alpha))$, and for $(x,\xi) \in G^{2}, \xi \in H$, we have 
     \[ f_{\alpha}(x\xi)= \int_{H}\sigma(\eta)\alpha(x\xi\eta)d\lambda_{H}^{d(\xi)}(\eta)= \int_{H} \sigma(\xi^{-1}\eta)\alpha(x\eta)d\lambda^{r(\xi)}_{H}(\eta)= \sigma(\xi^{-1})f_{\alpha}(x).\]
     Next, we show that $f_{\alpha}$ is continuous. For that, it is enough to show that $x\to \|f_{\alpha}(x)\|_{\mathcal{H}^{\sigma}_{d(x)}}$ and $x \to \langle f_{\alpha}(x), t(d(x))\rangle_{H_{d(x)}}$ is continuous, where $t \in C_{0}(G^{0},\mathcal{H}^{\sigma})$.
     
     Since $\sigma$ is a unitary representation, by [Lemma $3.4$,$3.5$]\cite{bos2011continuous} $(x,\eta) \to \sigma(\eta)\alpha(x\eta)$ is continuous over $(G\times H)\cap G^{2}$. 
     Now, for $t \in C_{0}(G^{0},\mathcal{H}^{\sigma})$
     $$ \langle f_{\alpha}(x), t(d(x)\rangle_{\mathcal{H}^{\sigma}_{d(x)}}= \int_{H} \langle\sigma(\eta)\alpha(x\eta), t(d(x))\rangle_{\mathcal{H}^{\sigma}_{d(x)}} d\lambda_{H}^{d(x)}(\eta)$$
     Using \cite[Lemma $3.12$]{bos2011continuous}, we can say that $x\to \langle f_{\alpha}(x), t(d(x)\rangle_{H_{d(x)}}$ is continuous.
     Similarly,
     $$\|f_{\alpha}(x)\|^{2}_{\mathcal{H}^{\sigma}_{d(x)}}=\iint_{H}\langle \sigma(\eta)\alpha(x\eta),\sigma(k)\alpha(xk)\rangle_{\mathcal{H}^{\sigma}_{d(x)}}d\lambda_{H}^{d(x)}(k)d\lambda_{H}^{d(x)}(\eta)$$ is continuous .

 Suppose $f \in \mathcal{F}_{0}^{\sigma}(G)$, then by Lemma $3.1$, there exist $\Psi \in C_{c}(G)$, such that $\int_{H}\Psi(x\eta)d\lambda_{H}^{d(x)}(\eta)= 1$ for $x\in supp(f)$. Let $\alpha= \Psi f$, then
 $$ f_{\alpha}(x)=\int_{H}\Psi(x\eta)\sigma(\eta)f(x\eta)d\lambda_{H}^{d(x)}= \int_{H}\Psi(x\eta)f(x)d\lambda_{H}^{d(x)}(\eta)=f(x).$$
 So, $f=f_{\alpha}.$
 \end{proof}

 Since $\sigma$ is a unitary representation, we can easily see that for $f,g\in \mathcal{F}_{0}^{\sigma}(G)$, the function $x\to \langle f(x),g(x)\rangle_{\mathcal{H}^{\sigma}_{d(x)}}$ defines a function in $C_{c}(G/H)$ and thus we can define a function on $C_{0}(G^{0})$, 
 as
 $$ \langle f,g\rangle(u)=\int_{G/H}\langle f(x),g(x)\rangle_{\mathcal{H}^{\sigma}_{d(x)}}d\mu^{u}(xH).$$
 Define a norm on $\mathcal{F}_{0}^{\sigma}(G)$, as 
 
 For $f\in \mathcal{F}_{0}^{\sigma}(G)$
 $$ \|f\|= \sup_{u\in G^{0}}\sqrt{\langle f,f\rangle(u)}.$$ 
 Let $\mathcal{F}^{\sigma}(G)$ be the completion of $\mathcal{F}_{0}^{\sigma}(G)$ under the above norm. It is also denoted as $\mathcal{F}^{\sigma}(G,\mu)$.
 
 In the next lemma we show that $\mathcal{F}^{\sigma}(G)$ form a left Hilbert $C_{0}(G^{0})$-module and if each fiber of $\mathcal{H^{\sigma}}$ is nonzero, it forms a full $C_{0}(G^{0})$-module.
 \begin{lem}\label{lem3.1.3}
     The Banach space $\mathcal{F}^{\sigma}(G)$ form  a  left Hilbert $C_{0}(G^{0})$- module under the following action:$$ \text{For}~~b\in C_{0}(G^{0}), f\in \mathcal{F}^{\sigma}(G), ~~bf(x)=b(r(x))f(x). $$ It forms a full left Hilbert $C_{0}(G^{0})$- module  if each fiber of $\mathcal{H}^{\sigma}$ is nonzero.
 \end{lem}
 \begin{proof}
     Let $f_{\alpha}\in \mathcal{F}_{0}^{\sigma}(G)$ and $b\in C_{0}(G^0)$, then by definition
     \begin{align*}
         bf_{\alpha}(x)= b(r(x))f_{\alpha}(x)&= b(r(x))\int_{H}\sigma(\xi)\alpha(x\xi)d\lambda_{H}^{d(x)}(\xi) \\
         &= \int_{H} \sigma(\xi)b(r(x\xi))\alpha(x\xi)d\lambda_{H}^{d(x)}(\xi)\\
         &=\int_{H}\sigma(\xi)(b\alpha)(x\xi)d\lambda_{H}^{d(x)}(\xi)= f_{b\alpha}(x).   
     \end{align*}
     Thus, $$\langle bf_{\alpha},bf_{\alpha}\rangle(u)= \int_{G/H} \langle bf_{\alpha}(x),bf_{\alpha}(x)\rangle_{\mathcal{H}^{\sigma}_{d(x)}}d\mu^{u}(xH)=|b(u)|^{2} \langle f,f\rangle(u).$$ So, $$\|bf_{\alpha}\|\leq \|b\|_{\infty}\|f_{\alpha}\|.$$ It is easy to see that the map $(f,g)\to \langle f, g\rangle: \mathcal{F}^{\sigma}(G)\times \mathcal{F}^{\sigma}(G) \to C_{0}(G^{0})$ satisfies every conditions in the definition \cite[Chapter $1$, (1.1)]{lance1995hilbert}. Also, by definition $\|f\|= \|\langle f,f\rangle\|_{C_{0}(G^{0})}^{\frac{1}{2}}$. 
    
    Suppose each fiber of $\mathcal{H}^{\sigma}$ is nonzero and $G^{0}$ is locally compact Hausdorff, we can easily see that the span of $\{\langle f,g\rangle: f,g \in \mathcal{F}^{\sigma}(G)\}$ is dense in $C_{0}(G^{0})$ by Stone Weierstrass theorem on locally compact space.
 \end{proof}
 Now, using \cite[Theorem $4.2.4$]{bos2007groupoids}, we can find a family of Hilbert spaces $\{\mathcal{F}^{u}\}_{u\in G^{0}}$ which forms a continuous Hilbert bundle $\Bar{\mathcal{F}}^{\sigma}$, also denoted as $\Bar{\mathcal{F}}^{\sigma}_{G}$, with fibres 
 $$ \mathcal{F}^{u}= \mathcal{F}^{\sigma}(G)/\mathcal{N}^{u} $$ where $$\mathcal{N}^{u}= \{f\in \mathcal{F}^{\sigma}(G): \|f\|^{2}(u)=\langle f,f\rangle(u)=0\}.$$
 We can identify $\mathcal{F}^{u}$ with the completion of $\{f_{|_{G^{u}}}: f\in \mathcal{F}_{0}^{\sigma}(G)\}$ under the norm $\sqrt{\langle f,f\rangle(u)}$.
 The space of continuous sections can be defined as 
 $$\Delta=\{t_{F} \in C_{0}(G^{0},\Bar{\mathcal{F}}^{\sigma}):t_{F}(u)= F 
 + \mathcal{N}^{u},F\in \mathcal{F}^{\sigma}(G)\}.$$
 More details can also be referred to in \cite{BSMF_1963__91__227_0,bos2007groupoids}.

 \begin{lem}
     Suppose $g \in C_{c}(G^{u},\mathcal{H}^{\sigma})$, then there exist $h\in C_{c}(G,\mathcal{H}^{\sigma})$ such that $h_{|_{G^{u}}}= g$.
 \end{lem}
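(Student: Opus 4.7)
The plan is to obtain the extension by a Tietze-type iteration built on top of a uniform approximation lemma. Set $K := supp(g)$, compact in $G^{u}$, and note that $G^{u} = r^{-1}(u)$ is closed in $G$ since $G^{0}$ is Hausdorff. Fix a relatively compact open $\Omega \subset G$ with $K \subset \Omega$; every extension we build will be supported in $\overline{\Omega}$.

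The approximation step asserts that for each $\epsilon > 0$ one can produce $h_{\epsilon} \in C_{c}(G, \mathcal{H}^{\sigma})$ with $supp(h_{\epsilon}) \subset \overline{\Omega}$, $\|h_{\epsilon}\|_{\infty} \leq \|g\|_{\infty} + \epsilon$, and $\sup_{y \in G^{u}} \|h_{\epsilon}(y) - g(y)\| < 3\epsilon$. To construct it, invoke the enough-sections property of the continuous Hilbert bundle $\mathcal{H}^{\sigma}$ (Remark 13.19 of \cite{fell1988representations}, quoted above): for each $x \in K$ pick $\xi_{x} \in C_{0}(G^{0}, \mathcal{H}^{\sigma})$ with $\xi_{x}(d(x)) = g(x)$, so that the pulled-back section $y \mapsto \xi_{x}(d(y))$ is continuous on $G$ and matches $g$ at $y = x$. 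By joint continuity one can shrink to a relatively compact open $V_{x} \subset \Omega$ with
\[
\|\xi_{x}(d(y)) - g(y)\| < \epsilon \quad (y \in V_{x} \cap G^{u}), \qquad \|\xi_{x}(d(y))\| \leq \|g\|_{\infty} + \epsilon \quad (y \in V_{x}).
\]
Let $K' := \{y \in G^{u} : \|g(y)\| \geq \epsilon\}$, a compact subset of $K$ and closed in $G$. Extract a finite subcover $V_{1}, \ldots, V_{n}$ of $K'$ with corresponding sections $\xi_{1}, \ldots, \xi_{n}$, and set $V_{0} := G \setminus K'$, obtaining an open cover of $G$. Choose a subordinate partition of unity $\{\phi_{0}, \phi_{1}, \ldots, \phi_{n}\}$ with $\phi_{1}, \ldots, \phi_{n}$ compactly supported (possible since $G$, being second countable locally compact Hausdorff, is paracompact), and define
\[
h_{\epsilon}(y) := \sum_{i=1}^{n} \phi_{i}(y)\, \xi_{i}(d(y)) \in C_{c}(G, \mathcal{H}^{\sigma}).
\]
On $K'$ one has $\phi_{0} = 0$ so $\sum_{i \geq 1} \phi_{i}(y) = 1$, and the $\epsilon$-bound on each $V_{i} \cap G^{u}$ gives $\|h_{\epsilon}(y) - g(y)\| < \epsilon$. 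On $G^{u} \setminus K'$ we have $\|g(y)\| < \epsilon$ while, for each $i$ with $\phi_{i}(y) > 0$, $\|\xi_{i}(d(y))\| \leq \|\xi_{i}(d(y)) - g(y)\| + \|g(y)\| < 2\epsilon$, hence $\|h_{\epsilon}(y) - g(y)\| < 3\epsilon$.

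For the iteration, set $g_{1} := g$ and inductively apply the approximation lemma with $\epsilon_{n} = \|g_{n}\|_{\infty}/6$ to obtain $h_{n} \in C_{c}(G, \mathcal{H}^{\sigma})$ supported in $\overline{\Omega}$ with $\|h_{n}|_{G^{u}} - g_{n}\|_{\infty} < \|g_{n}\|_{\infty}/2$ and $\|h_{n}\|_{\infty} \leq \tfrac{7}{6}\|g_{n}\|_{\infty}$; put $g_{n+1} := g_{n} - h_{n}|_{G^{u}}$, still of compact support in $\overline{\Omega} \cap G^{u}$. Then $\|g_{n}\|_{\infty} \leq \|g\|_{\infty}/2^{n-1}$ and $\|h_{n}\|_{\infty} \leq \tfrac{7}{6}\|g\|_{\infty}/2^{n-1}$, so $h := \sum_{n=1}^{\infty} h_{n}$ converges uniformly on $G$ to a continuous section of $\mathcal{H}^{\sigma}$ supported in $\overline{\Omega}$, i.e.\ an element of $C_{c}(G, \mathcal{H}^{\sigma})$. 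Telescoping on $G^{u}$ gives $h|_{G^{u}} = \lim_{N \to \infty}(g - g_{N+1}) = g$, as required.

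The main obstacle is the approximation step: the partition-of-unity patching must simultaneously respect the bulk of $K$, where $\xi_{i} \circ d$ is required to approximate a sizeable $g$, and a boundary region where $g$ tapers to $0$ while the pulled-back sections $\xi_{i} \circ d$ are generically still nonzero. Introducing the threshold $K'$ and the complementary open set $V_{0}$ isolates these two regimes and allows one uniform bound, after which the rest is a straightforward geometric-series Tietze argument.
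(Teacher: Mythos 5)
Your proof is correct, and its skeleton (local approximation by a partition of unity, then a geometric-series extension) is the same as the paper's. The genuine difference lies in how the global size of the correction terms is controlled. The paper first produces $h_{n}\in C_{c}(G,\mathcal{H}^{\sigma})$ with $\|g-h_{n}\|_{\infty}<\tfrac1n$ on $G^{u}$ only, then passes to a subsequence with $\|h_{n+1}^{u}-h_{n}^{u}\|_{\infty}<2^{-n}$ and invokes Lemma $3.4$ (continuity of $u\mapsto\|h^{u}\|_{\infty}$) to replace each difference $h_{n+1}-h_{n}$ by a function $g_{n}$ that agrees with it on $G^{u}$ but is globally bounded by $2^{-n}$, so that the series converges. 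You instead build the global bound $\|h_{\epsilon}\|_{\infty}\le\|g\|_{\infty}+\epsilon$ directly into the approximation step and run the classical Tietze subtract-and-iterate scheme ($g_{n+1}=g_{n}-h_{n}|_{G^{u}}$), which makes Lemma $3.4$ unnecessary and is arguably cleaner. You are also more explicit than the paper on two points it glosses over: where the local approximants come from (sections of $\mathcal{H}^{\sigma}$ over $G^{0}$ pulled back along $d$, via the enough-sections property) and why the estimate survives near the boundary of $\mathrm{supp}(g)$, where $g$ is small but the pulled-back sections need not be --- your threshold set $K'$ and the extra open piece $V_{0}$ handle exactly the case the paper's phrase ``for all $y\in G^{u}$'' quietly absorbs. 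One cosmetic remark: in the iteration the supports of the remainders $g_{n}$ may creep up to $\partial\Omega$, so you should fix once and for all a slightly larger relatively compact open set in which all the $V_{x}$ are chosen; this is a one-line adjustment.
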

 \begin{proof}
     Let $K$ be the compact support of $g$. Let $U_{0}$  be a compact neighbourhood of $K$ in $G$. For each $x\in K, \epsilon>0$, by continuity of $g$, there exist a neighborhood $U_{x} \subset {U_{0}}$ of $x$ in $G$ and a continuous compactly supported function $h_{x}$ with support in $U_{0}$ such that $\|g(y)-h_{x}(y)\|_{\mathcal{H}^{\sigma}_{d(y)}}< \epsilon$ in $U_{x}\cap G^{u}$.
     
      Since, $K$ is compact, there exist a finite cover $\{U_{x_{i}}\}_{i=1}^{n}$of $K$. Let $\{b_{i}\}$ be such that $b_{i}\in C_{c}(U_{x_{i}})$ such that $\sum_{i=1}^{n} b_{i}=1$ on $K$. Then $\sum_{i=1}^{n}b_{i}g =g$ and $\|g(y)-\sum_{i=1}^{n}b_{i}h_{x_{i}}(y)\|_{\mathcal{H}^{\sigma}_{d(y)}}<\epsilon$ for all $y\in G^{u}$.
      Now for  $\epsilon=\frac{1}{n},n\in \mathbb{N}$, there exist a sequence $\{h_{n}\}\in C_{c}(G,\mathcal{H}^{\sigma})$ supported in $U_{0}$, such that $\|g(y)-h_{n}(y)\|_{\mathcal{H}^{\sigma}_{d(y)}}< \frac{1}{n}$ for every $y \in G^{u}$. By passing through subsequence, if necessary, the sequence $\{h_{n}\}$ is such that $\|h_{n+1}^{u}-h_{n}^{u}\|_{\infty}<\frac{1}{2^{n}}$. We can define a sequence of compactly supported functions, $\{g_{n}\}$ such that $g_{n}^{u}= h_{n+1}^{u}-h_{n}^{u},n\geq 1$ and $g_{0}= h_{1}$ with $\|g_{n}\|_{\infty}< \frac{1}{2^{n}}$. Then $h=\sum_{n=0}^{\infty}g_{n}$ is the required function. 
      \end{proof}
  Now, we show that the groupoid $G$ acts on the continuous field of Hilbert space $(\Bar{\mathcal{F}}_{G}^{\sigma},\Delta)$ through left regular representation.
  \begin{prop}
      The representation $(\Bar{\mathcal{F}}_{G}^{\sigma},\Delta,L)$ is a continuous unitary representation of $G$ where $$ L(x): \mathcal{F}^{d(x)}\to \mathcal{F}^{r(x)}, ~~(L(x)f)(y)= f(x^{-1}y). $$
  \end{prop}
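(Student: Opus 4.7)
The plan is to verify, in order, that $L(x)$ is well-defined and isometric on the fibres, that the algebraic axioms (i)--(iv) of a representation hold, and finally to prove the continuity axiom (v) of matrix coefficients --- the last being the main technical step.

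\emph{Well-definedness, isometry and algebra.} For $f\in\mathcal{F}_{0}^{\sigma}(G)$ and $y\in G^{r(x)}$ one has $x^{-1}y\in G^{d(x)}$, so $(L(x)f)(y)=f(x^{-1}y)\in\mathcal{H}^{\sigma}_{d(y)}$ has the correct target; the right $H$-equivariance $(L(x)f)(y\xi)=\sigma(\xi^{-1})(L(x)f)(y)$ and the compactness of $q_{H}(\operatorname{supp}(L(x)f))\subset x\cdot q_{H}(\operatorname{supp} f)$ are immediate. Applying the $r_{G^{0}}$-equivariance of $\mu$ with $\gamma=x$ to the function $yH\mapsto\|f(x^{-1}y)\|^{2}$, which descends to $G/H$ because $\sigma$ is unitary, gives
\[
\langle L(x)f,L(x)f\rangle(r(x))=\int_{G/H}\|f(x^{-1}y)\|^{2}\,d\mu^{r(x)}(yH)=\int_{G/H}\|f(y)\|^{2}\,d\mu^{d(x)}(yH)=\langle f,f\rangle(d(x)),
\]
so $L(x)$ extends to an isometry $\mathcal{F}^{d(x)}\to\mathcal{F}^{r(x)}$. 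The identities $(L(u)f)(y)=f(u^{-1}y)=f(y)$ for $y\in G^{u}$, $(L(x)L(y)f)(z)=f(y^{-1}x^{-1}z)=(L(xy)f)(z)$ for $(x,y)\in G^{2}$, and the consequent $L(x)^{-1}=L(x^{-1})$, are then one-line substitutions; together with the isometry, they force each $L(x)$ to be unitary.

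\emph{Continuity.} Since the sections $\{t_{F}:F\in\mathcal{F}_{0}^{\sigma}(G)\}$ are fibrewise dense in $\bar{\mathcal{F}}^{\sigma}$, hence uniformly dense in $C_{0}(G^{0},\bar{\mathcal{F}}^{\sigma})$, and $\|L(x)\|\le 1$, it suffices to prove that
\[
I_{F,G}(x):=\langle L(x)t_{F}(d(x)),t_{G}(r(x))\rangle=\int_{G/H}\langle F(x^{-1}y),G(y)\rangle\,d\mu^{r(x)}(yH)
\]
is continuous in $x\in G$ for every $F,G\in\mathcal{F}_{0}^{\sigma}(G)$. The strategy is to decouple the $x$-dependence of the integrand from that of the measure. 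Applying the equivariance of $\mu$ once more (with $\gamma=x$) to the function $yH\mapsto\langle F(x^{-1}y),G(y)\rangle$ --- again well-defined on $G/H$ by unitarity of $\sigma$ --- rewrites the matrix coefficient as
\[
I_{F,G}(x)=\int_{G/H}\langle F(y),G(xy)\rangle\,d\mu^{d(x)}(yH),
\]
in which the integrand $(x,yH)\mapsto\langle F(y),G(xy)\rangle$ is jointly continuous with $yH$-support uniformly contained in a fixed compact set as $x$ varies over a compact neighbourhood.

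\emph{Main obstacle.} The crux is finishing Step~3: one must combine a continuously $x$-varying integrand with the continuously $x$-varying family $\mu^{d(x)}$. To carry this out, write $F=f_{\alpha}$, $G=f_{\beta}$ via Proposition~3.2, pick a cutoff $\phi\in C_{c}(G)$ with $P\phi\equiv 1$ on $q_{H}(\operatorname{supp}\alpha)$ (Lemma~3.1), and unfold the $G/H$-integral into an iterated integral against $\lambda_{H}$ and $\mu$. The resulting integrand is jointly continuous in $(x,y,\eta)$ with uniformly compact support; continuity of $I_{F,G}$ then follows from the continuity axioms (ii) for $\mu$ and for the Haar system $\lambda_{H}$, applied via a standard partition-of-unity / Stone--Weierstrass approximation argument in the parameter $x$.
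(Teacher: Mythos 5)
Your proposal is correct and follows essentially the same route as the paper: the isometry and the algebraic axioms come from the equivariance of $\mu$ and direct substitution, and the continuity of matrix coefficients is reduced to integrating a jointly continuous, compactly supported function on a closed subset of $G\times G/H$ against the continuous system $\{\mu^{u}\}$. The only difference is cosmetic: where the paper simply cites the analogue of \cite[Lemma 3.12]{bos2011continuous} for this last step, you sketch its proof (unfolding the $G/H$-integral via a cutoff with $P\phi\equiv 1$ into iterated integrals against $\lambda_{H}$ and $\mu$), and you insert an extra, harmless equivariance rewriting that moves the measure from $\mu^{r(x)}$ to $\mu^{d(x)}$.
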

  \begin{proof}
      Since, for $\alpha \in C_{c}(G,\mathcal{H}^{\sigma})$ with $\alpha(x)\in \mathcal{H}^{\sigma}_{d(x)}$, $L(x_{0})\alpha \in C_{c}(G^{r(x_{0})},\mathcal{H}^{\sigma})$ and by Proposition $3.2$ and Lemma $3.4$, we can easily see that $L(x_{0})f_{\alpha} \in \mathcal{F}^{r(x_{0})}$.
      
      We prove each $L(x)$ is isometry and the map $x\to  \langle L(x)t_{f}(d(x)),t_{g}(r(x))\rangle(r(x))$ is continuous. The other conditions follow easily.
      
      With the equivariance of $\{\mu^{u}\}$, we can see that 
      
          For $f\in \mathcal{F}_{0}^{\sigma}(G)$,
          \begin{align*}
          \langle L(x_{0})f,L(x_{0})f\rangle(r(x_{0}))&=\int_{G/H}\langle f(x_{0}^{-1} y),f(x_{0}^{-1} y)\rangle _{\mathcal{H}^{\sigma}_{d(y)}}d\mu^{r(x_{0})}(yH)\\
          &= \int_{G/H}\langle f(y),f(y)\rangle _{\mathcal{H}^{\sigma}_{d(y)}}d\mu^{d(x_{0})}(yH)\\
          &=\langle f,f\rangle(d(x_{0}))
      \end{align*}
      
      Let $t_{f}$ and $t_{g}$ be two continuous sections, where $f,g \in \mathcal{F}_{0}^{\sigma}(G)$, then
      \begin{align*}
          \langle L(x)t_{f}(d(x)),t_{g}(r(x))\rangle(r(x))&=\int_{G/H}\langle f(x^{-1}\cdot y),g(y)\rangle _{\mathcal{H}^{\sigma}_{d(y)}}d\mu^{r(x)}(yH)
      \end{align*}
      The function inside the integral is a continuous function over $G*G/H$ which is a closed subset of $G\times G/H$ and hence by a similar argument of \cite[Lemma $3.12$]{bos2011continuous}, we can say that $x\to \langle L(x)t_{f}(d(x)),t_{g}(r(x))\rangle(r(x))$ is continuous.
  \end{proof}
  The continuous unitary representation $(\Bar{\mathcal{F}}_{G}^{\sigma},\Delta,L)$ is called the representation induced by $\sigma$ and can be denoted by $\left(ind^{G}_{H}(\sigma),\mu \right)$.
  
  Next, we prove an important lemma which will be useful later in this paper. Before that, we provide some observations.
   Given $\{\mu^{u}\}_{u\in G^{0}}$, we can define a haarsystem $\{\lambda^{u}\}_{u\in G^{0}}$ on $G$ as 
   \begin{equation}
     \int gd\lambda^{u} = \int Pg d\mu^{u}, ~~g\in C_{c}(G).  
   \end{equation} 
   For $f\in C_{c}(G)$, $t \in C_{0}(G^{0},\mathcal{H})$, we can define 
   $\mathcal{E}(f,t)\in \mathcal{F}^{\sigma}_{0}(G)$ as
   $$ \mathcal{E}(f,t)(x)=\int f(xh)\sigma(h)t(d(h))d\lambda_{H}^{d(x)}(h).$$
    
   \begin{lem}
       Let $H$ be a closed subgroupoid of locally compact groupoid $G$ and $\sigma$ be a representation of $H$, and put 
       $$ \mathcal{E}(C_{c}(G),C_{0}(G^{0},\mathcal{H}^{\sigma}))=\{\mathcal{E}(f,t): f\in C_{c}(G),t\in C_{0}(G^{0},\mathcal{H}^{\sigma})\}$$
       then,  $\mathcal{E}(C_{c}(G),C_{0}(G^{0},\mathcal{H}^{\sigma}))$ is a total subset  in $\mathcal{F}^{\sigma}(G)$. Also, for $g \in C_{c}(G),t\in C_{0}(G^{0},\mathcal{H}^{\sigma})$, 
       $$\|\mathcal{E}(g,t)\|\leq c \|g\|_{\infty}\|t\|, ~~ \text{where}~c~\text{depends only on the support of}~ g.$$
    \end{lem}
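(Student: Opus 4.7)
The plan is to dispose of the norm estimate first, and then use it together with Proposition 3.2 to deduce totality. Since $\sigma$ is unitary, $\|\sigma(h)t(d(h))\|_{\mathcal{H}^{\sigma}_{r(h)}}=\|t(d(h))\|_{\mathcal{H}^{\sigma}_{d(h)}}\le \|t\|$, so pointwise
\[
\|\mathcal{E}(g,t)(x)\|_{\mathcal{H}^{\sigma}_{d(x)}}\le \|t\|\,P|g|(xH).
\]
Squaring, integrating against $\mu^{u}$, and using the crude bound $(P|g|)^{2}\le \|P|g|\|_{\infty}\,P|g|$ yields
\[
\langle \mathcal{E}(g,t),\mathcal{E}(g,t)\rangle(u)\le \|t\|^{2}\,\|P|g|\|_{\infty}\int_{G/H}P|g|\,d\mu^{u}=\|t\|^{2}\,\|P|g|\|_{\infty}\int_{G}|g|\,d\lambda^{u},
\]
where the last equality is relation $(1)$. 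Choose $\psi\in C_{c}(G)$ dominating $\mathbf{1}_{\mathrm{supp}(g)}$; then $\|P|g|\|_{\infty}\le \|g\|_{\infty}\|P\psi\|_{\infty}$, $\int |g|\,d\lambda^{u}\le \|g\|_{\infty}\int\psi\,d\lambda^{u}$, and the continuous compactly supported function $u\mapsto\int\psi\,d\lambda^{u}$ is bounded by some $M$. Taking $c=\sqrt{\|P\psi\|_{\infty}M}$ gives $\|\mathcal{E}(g,t)\|\le c\|g\|_{\infty}\|t\|$ with $c$ depending only on $\mathrm{supp}(g)$ through the choice of $\psi$.

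For totality, observe the key identity
\[
\mathcal{E}(f,t)(x)=\int_{H}f(x\eta)\sigma(\eta)t(d(\eta))\,d\lambda_{H}^{d(x)}(\eta)=f_{\beta}(x),\qquad \beta(y):=f(y)\,t(d(y)),
\]
which uses $d(\eta)=d(x\eta)$ for composable $(x,\eta)$. Hence any finite sum $\sum_{i}\mathcal{E}(f_{i},t_{i})$ equals $f_{\beta}$ for $\beta=\sum_{i}f_{i}\cdot(t_{i}\circ d)\in C_{c}(G,\mathcal{H}^{\sigma})$. By Proposition 3.2 and the definition of $\mathcal{F}^{\sigma}(G)$, it is enough to approximate an arbitrary $f_{\alpha}$ with $\alpha\in C_{c}(G,\mathcal{H}^{\sigma})$ in norm by such sums. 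The estimate of the first paragraph extends verbatim, replacing $g(xh)t(d(h))$ by a generic $\gamma(xh)$, to give $\|f_{\gamma}\|\le c_{V}\|\gamma\|_{\infty}$ whenever $\mathrm{supp}(\gamma)\subset V$, with $c_{V}$ depending only on the fixed compact $V$. So the task reduces to a sup-norm approximation of $\alpha$, with controlled support, by sums $\sum_{i}f_{i}\cdot(t_{i}\circ d)$.

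The main technical point is this approximation. Fix $\varepsilon>0$ and set $K=\mathrm{supp}(\alpha)$. For each $x_{0}\in K$, since $\mathcal{H}^{\sigma}$ has enough continuous sections (recorded in Section 2), one can pick $t_{x_{0}}\in C_{0}(G^{0},\mathcal{H}^{\sigma})$ with $t_{x_{0}}(d(x_{0}))=\alpha(x_{0})$; by joint continuity of $\alpha$ and of $y\mapsto t_{x_{0}}(d(y))$, there is an open neighbourhood $U_{x_{0}}\ni x_{0}$ on which $\|\alpha(y)-t_{x_{0}}(d(y))\|<\varepsilon$. Fix a compact neighbourhood $V\supset K$, extract a finite subcover $\{U_{x_{i}}\}$ of $K$ lying inside $V$, and take a subordinate partition of unity $\{f_{i}\}\subset C_{c}(G)$ with $\sum_{i}f_{i}=1$ on $K$ and $\mathrm{supp}(f_{i})\subset U_{x_{i}}\cap V$. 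Setting $\beta=\sum_{i}f_{i}\cdot(t_{x_{i}}\circ d)$ gives $\mathrm{supp}(\beta)\subset V$ and $\|\alpha-\beta\|_{\infty}<\varepsilon$, whence $\bigl\|f_{\alpha}-\sum_{i}\mathcal{E}(f_{i},t_{x_{i}})\bigr\|=\|f_{\alpha-\beta}\|\le c_{V}\varepsilon$. Letting $\varepsilon\to 0$ finishes the proof. The delicate point to check carefully is the very last approximation: a section of the pullback bundle $d^{*}\mathcal{H}^{\sigma}$ over $G$ must be locally approximated by products of scalar $C_{c}$-functions with pullbacks of sections from $G^{0}$, and this is precisely the step that rests on $\mathcal{H}^{\sigma}$ having enough continuous sections.
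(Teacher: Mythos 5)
Your proof is correct and follows essentially the same strategy as the paper: a support-controlled estimate $\|f_\gamma\|\le c_V\|\gamma\|_\infty$ (the paper's constant is $\sup_u\lambda_H^u(U_0^{-1}U_0\cap H)$ together with $\sup_u\mu^u(q_H(U_0))$, matching yours) combined with a uniform approximation of $\alpha$ by sums $\sum_i f_i\cdot(t_i\circ d)$. The only differences are cosmetic: you route the norm bound through $P|g|$ and relation $(1)$ rather than the paper's double integral over $H\times H$, and you supply the partition-of-unity argument for the uniform approximation step, which the paper simply asserts.
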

    \begin{proof}
       It is enough to show that $\mathcal{E}(C_{c}(G),C_{0}(G^{0},\mathcal{H}^{\sigma}))$ is a total subset in $\mathcal{F}^{\sigma}_{0}(G)$. Let $f_{\alpha} \in \mathcal{F}^{\sigma}_{0}(G)$, where $\alpha \in C_{c}(G, \mathcal{H}^{\sigma})$ with compact support $K$. There exist a compact neighbourhood $U_{0}$ containing $K$.

       For any $\epsilon >0$, there exist $f_{i} \in C_{c}(G)$ supported in $U_{0}$ and $t_{i}\in C_{0}(G^{0},\mathcal{H}^{\sigma}), i=1,2,...,n$, such that $\|\alpha(x)-\sum_{i=1}^{n}f_{i}(x)t_{i}(d(x))\|_{\mathcal{H}^{\sigma}_{d(x)}}<\epsilon$ for all $x\in G$.
       
       Let, $F_{n}= \sum_{i=1}^{n} \mathcal{E}(f_{i},t_{i})$. Then, $$f_{\alpha}(x)-F_{n}(x) = \int_{H} \sigma(h) [\alpha(xh)-\sum_{i=1}^{n}f_{i}(xh)t_{i}(d(h))] d\lambda_{H}^{d(x)}(h).$$
       For $x\in U_{0}$, $$ \|f_{\alpha}(x)-F_{n}(x)\|_{\mathcal{H}^{\sigma}_{d(x)}} \leq \int_{H} \|\alpha(xh)-\sum_{i=1}^{n} f_{i}(xh)t_{i}(d(h))\|d\lambda_{H}^{d(x)}(h) \leq \epsilon M,$$ where $M= \sup_{u\in G^{0}}\lambda_{H}^{u}(U_{0}^{-1}U_{0}\cap H).$ So,
        \begin{align*}
            \langle f_{\alpha}-F_{n},f_{\alpha}-F_{n} \rangle(u) &= \int_{G/H} \|f_{\alpha}(x)-F_{n}(x)\|^{2}_{\mathcal{H}^{\sigma}_{d(x)}}d\mu^{u}(xH) \leq \epsilon^{2} \sup_{u\in G^{0}}\mu^{u}(q_{H}(U_{0})).
        \end{align*}
       Hence,  $\mathcal{E}(C_{c}(G),C_{0}(G^{0},\mathcal{H}^{\sigma}))$ is a total subset  in $\mathcal{F}^{\sigma}(G)$.
      
     Observe that, $q(supp(\mathcal{E}(g,t)))\subset q(supp(g))$ where $g \in C_{c}(G),t\in C_{0}(G^{0},\mathcal{H}^{\sigma})$. Let $supp (g)$ be contained in a relatively compact set $K$. By the property of $\mathcal{E}(g,t)$, we can see that 
     \begin{align*}
       &\left|\Big\langle \mathcal{E}(g,t)(x), ~\mathcal{E}(g,t)(x)\Big\rangle\right|\\
       \leq & \iint\limits_{H}\left|g(xh)g(xh')\Big\langle \sigma(h) (t(d(h))),\sigma(h')(t(d(h')))\Big\rangle\right| d\lambda_{H}^{d(x)}(h)d\lambda_{H}^{d(x)}(h')  \\
       \leq & c' \|g\|_{\infty}^{2}\|t\|^{2},~~~~ \text{where}~~ c'= (\sup_{u\in G^{0}}\lambda_{H}^{u}(K^{-1}K))^{2}.  
     \end{align*} 
     Thus, $\|\mathcal{E}(g,t)\|\leq c \|g\|_{\infty}\|t\|$ , where $c$ only depends on the support of $g.$
    \end{proof}

    \begin{rem}
      If $C_{0}(G^{0},\mathcal{H}^{\sigma})$ is countably generated, we can easily see that  $\mathcal{F}^{\sigma}(G,\mu)$ is countably generated.
  \end{rem} 

  The following proposition provides a relation between two continuous unitarily equivalent representations of closed subgroupoids and their induced representations, as well as the direct sum of representations. 
 
  \begin{thm} Let $H$ be a closed wide subgroupoid of $G$ and $\mu$ a full equivariant $r_{G^{0}}$-system on $G/H$.
  \begin{enumerate}[(i)]
      \item  If $\sigma$ and $\sigma'$ are unitarily equivalent representations of $H$, then $\left(ind_{H}^{G}(\sigma),\mu\right)$ and $\left(ind_{H}^{G}(\sigma'),\mu \right)$ are equivalent representations of $G$.
      \item If $\{\sigma_{i}\}_{i\in I}$ is any family of representations of $H$, then $\left(ind_{H}^{G}(\bigoplus_{i\in I} \sigma_{i}),\mu \right)$ is equivalent to $\bigoplus_{i\in I} \left(ind^{G}_{H}(\sigma_{i}),\mu \right)$.
  \end{enumerate}
       
  \end{thm}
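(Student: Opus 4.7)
Fix $T\in Mor(\sigma,\sigma')$ whose fibres $T_u\colon \mathcal{H}^{\sigma}_u \to \mathcal{H}^{\sigma'}_u$ are unitary. I define $\widetilde{T}_0\colon \mathcal{F}_0^{\sigma}(G)\to \mathcal{F}_0^{\sigma'}(G)$ by the pointwise rule $\widetilde{T}_0(f)(x)=T_{d(x)}(f(x))$. Three things need checking. First, since each $T_u$ is injective, $\operatorname{supp}(\widetilde{T}_0 f)=\operatorname{supp}(f)$, so $q_H(\operatorname{supp}\widetilde{T}_0 f)$ is compact. Second, the equivariance $\widetilde{T}_0(f)(x\xi)=\sigma'(\xi^{-1})\widetilde{T}_0(f)(x)$ follows from the intertwining $T_u\sigma(\xi)=\sigma'(\xi)T_u$ applied at $u=d(\xi)=d(x\xi)$. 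Third, joint continuity of $x\mapsto T_{d(x)}(f(x))$ follows from $T$ being a continuous bundle morphism. Since each $T_u$ is unitary, $\langle \widetilde{T}_0 f, \widetilde{T}_0 g\rangle=\langle f,g\rangle$ in $C_0(G^0)$, so $\widetilde{T}_0$ extends to a $C_0(G^0)$-linear isometry $\widetilde{T}$ from $\mathcal{F}^{\sigma}(G)$ onto $\mathcal{F}^{\sigma'}(G)$; applying the same construction to $T^{-1}=T^{*}$ gives its inverse. Passing to the fibres $\mathcal{F}^{u}$ of $\overline{\mathcal{F}}_G^\sigma$, $\widetilde{T}$ descends to a unitary bundle morphism, and the intertwining with the induced action is immediate, since for $y\in G^{r(x)}$
$$\widetilde{T}(L_{\sigma}(x)f)(y)=T_{d(y)}(f(x^{-1}y))=L_{\sigma'}(x)\widetilde{T}(f)(y).$$

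\textbf{Plan for part (ii).} Realize $\bigoplus_{i\in I}\sigma_i$ on the $\ell^2$-direct sum bundle $\bigoplus_i\mathcal{H}^{\sigma_i}$ and define $\Phi\colon \mathcal{F}^{\bigoplus_i \sigma_i}(G)\to \bigoplus_i \mathcal{F}^{\sigma_i}(G)$ by coordinate projection. For $f\in \mathcal{F}_0^{\bigoplus_i\sigma_i}(G)$ with components $f_i$, each $f_i$ inherits the $H$-equivariance for $\sigma_i$ and satisfies $q_H(\operatorname{supp} f_i)\subseteq q_H(\operatorname{supp} f)$, so $f_i\in \mathcal{F}_0^{\sigma_i}(G)$. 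The pointwise identity $\|f(x)\|^2=\sum_i\|f_i(x)\|^2$ yields
$$\langle f,f\rangle(u)=\int_{G/H}\sum_i\|f_i(x)\|^2\,d\mu^u(xH)=\sum_i\langle f_i,f_i\rangle(u),$$
so $\Phi$ is an isometric $C_0(G^0)$-module map. For density of its range, I note that any finitely supported tuple $(g_i)_{i\in I_0}$, $I_0\subset I$ finite, with $g_i\in \mathcal{F}_0^{\sigma_i}(G)$, lies trivially in the image (assemble the componentwise section in $\mathcal{F}_0^{\bigoplus_i\sigma_i}(G)$); such tuples are dense in the $\ell^2$-direct sum by definition. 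Finally, $\Phi$ commutes with left translation by $x^{-1}$ on the argument, hence intertwines $\bigl(ind_H^{G}(\bigoplus_i\sigma_i),\mu\bigr)$ with $\bigoplus_i\bigl(ind_H^{G}(\sigma_i),\mu\bigr)$ at the level of the bundles.

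\textbf{Main obstacle.} The algebraic intertwining identities are formal; the delicate points are bundle-theoretic. In (i) one has to verify that $\widetilde{T}_0$ is compatible with the fibering recipe $\mathcal{F}^{u}=\mathcal{F}^{\sigma}(G)/\mathcal{N}^{u}$ used to build $\overline{\mathcal{F}}_G^{\sigma}$ and $\overline{\mathcal{F}}_G^{\sigma'}$; this is where unitarity of $T_u$ (which equates the two null-spaces) is essential. In (ii) the subtlety is setting up $\bigoplus_i\mathcal{H}^{\sigma_i}$ as a continuous field in the case of an infinite index set, and then arguing that finitely supported tuples are total in the induced module of the direct sum. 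The latter is the genuine sticking point, but it reduces to applying Lemma 3.7 coordinatewise, using that the total set $\mathcal{E}(C_c(G),C_0(G^0,\mathcal{H}^{\sigma_i}))$ in each summand is built from scalar-valued $f\in C_c(G)$ that can be paired with sections that are nonzero in only finitely many coordinates of the direct sum.
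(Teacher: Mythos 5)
Your proposal is correct and follows essentially the same route as the paper: in (i) the fibrewise application of $T$ gives an inner-product-preserving module isomorphism $\mathcal{F}^{\sigma}(G)\to\mathcal{F}^{\sigma'}(G)$ (the paper phrases this via $Tf_{\alpha}=f_{T\alpha}$ and cites Lance's theorem where you invoke $T^{-1}$, but these are the same argument), and in (ii) the coordinate-projection map together with the density of finitely supported tuples is exactly the paper's $W$. The only nitpick is that in (i) the intertwining relation should be written $T_{r(\xi)}\sigma(\xi)=\sigma'(\xi)T_{d(\xi)}$ rather than with a single index $u$, since $\xi\in H$ need not lie in an isotropy group; the computation is unaffected.
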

  \begin{proof}
     $(i) \Rightarrow$ Since $\sigma$ and $\sigma'$ are equivalent, there exist $T \in Mor(\sigma,\sigma')$ such that each $T_{u}$ is unitary .
      For $\alpha \in C_{c}(G,H^{\sigma})$ by continuity of $T$, the function $T\alpha$, defined as 
      $$ T\alpha(x)=T_{d(x)}\alpha(x)$$
      is a compactly supported continuous function on $G$ with range in $\mathcal H^{\sigma'}$ and we can easily see that every functions in $C_{c}(G,\mathcal{H}^{\sigma'})$ is of this form.

      For $f_{\alpha}\in \mathcal{F}_{0}^{\sigma}(G)$, define
      \begin{align*}
          Tf_{\alpha}(x)= T_{d(x)}f_{\alpha}(x)&=\int_{H}T_{d(x)}\sigma(\eta)\alpha(x\eta)d\lambda_{H}^{d(x)}(\eta)\\ &= \int_{H}\sigma'(\eta)T_{d(\eta)}\alpha(x\eta)d\lambda_{H}^{d(x)}(\eta)\\ &= \int_{H}\sigma'(\eta)T\alpha(x\eta)d\lambda_{H}^{d(x)}(\eta)
          = f_{T\alpha}(x)
      \end{align*}
      Thus, $Tf_{\alpha} \in \mathcal{F}_{0}^{\sigma'}(G)$.
  
  Also, it is easy to see that $T$ preserves the $C_{0}(G^{0})$-valued innerproduct.
  Note that $T$ is also a module map. Hence, by \cite[Theorem $3.5$]{lance1995hilbert}, $\mathcal{F}^{\sigma}(G)$ and $\mathcal{F}^{\sigma'}(G)$ are unitarily equivalent  Hilbert $C_{0}(G^{0})$-modules and corresponding Hilbert spaces $\mathcal{F}^{u}$ and $\mathcal{F}'^{u}$ are isometrically isomorphic.  The continuous map  induced by $T$,  denoted by $T':\Bar{\mathcal{F}}_{G}^{\sigma} \to \Bar{\mathcal{F}}_{G}^{\sigma'}$ intertwines with the induced representations. Let $\rho =\left(ind_{H}^{G}(\sigma),\mu\right), \rho' =\left(ind_{H}^{G}(\sigma'),\mu\right)$ and $f\in \mathcal{F}_{0}^{\sigma}(G)$,
  \begin{align*}
     T'_{r(x)}(\rho(x)f)(y) = T_{d(y)}(f(x^{-1}y))= (T'_{d(x)}f)(x^{-1}y)= \rho'(x)(T'_{d(x)}f)(y).
  \end{align*}
   Hence, $\left(ind_{H}^{G}(\sigma),\mu\right)$ and $\left(ind_{H}^{G}(\sigma'),\mu\right)$ are equivalent representations of $G$.
   
   $(ii) \Rightarrow$ Let $\sigma=\bigoplus\limits_{i=I} \sigma_{i} ,\pi=ind_{H}^{G}\sigma$ and $\pi_{i}=ind_{H}^{G}\sigma_{i},i\in I$. $\mathcal{H}^{\sigma}$ is the direct sum of Hilbert bundles $\mathcal{H}^{\sigma_{i}}$ and $p^{i}:\mathcal{H}^{\sigma}\to \mathcal{H}^{\sigma_{i}} $ be the projection as per the definition given in \cite[Section $15.14$]{fell1988representations}. For $f_{\alpha} \in \mathcal{F}^{\sigma}_{0}(G)$ and $i\in I$, define $f_{\alpha}^{i}:G\to \mathcal{H}^{\sigma_{i}}$ as $f_{\alpha}^{i}(x)=p^{i}(f_{\alpha}(x))$. So for $h\in H$ and $x\in G$,
   $$f_{\alpha}^{i}(xh)=p^{i}(f_{\alpha}(xh))=p^{i}(\sigma(h^{-1})f_{\alpha}(x))=\sigma_{i}(h^{-1})f^{i}_{\alpha}(x).$$
Note that $q(supp (f_{\alpha}^{i})\subset q(supp(f_{\alpha}))$. Hence $f_{\alpha}^{i}\in \mathcal{F}^{\sigma_{i}}(G)$. 

Define $W: \mathcal{F}_{0}^{\sigma}(G)\to \bigoplus\limits_{i\in I}\mathcal{F}^{\sigma_{i}}(G)$ as $(Wf_{\alpha})_{i}(x)=f_{\alpha}^{i}(x)$ for $x \in G$.
   \begin{align*}
       \langle Wf_{\alpha},Wf_{\alpha}\rangle(u) &= \sum\limits_{i\in I}\Big\langle (Wf_{\alpha})_{i},(Wf_{\alpha})_{i}\Big\rangle(u) \\ &= \sum\limits_{i\in I}\int_{G/H}\Big\langle p^{i}f_{\alpha}(x),p^{i}f_{\alpha}(x)\Big\rangle d\mu^{u}(xH)\\&=\int_{G/H}\sum\limits_{i\in I}\Big\langle p^{i}f_{\alpha}(x),p^{i}f_{\alpha}(x)\Big\rangle d\mu^{u}(xH)\\&=\int_{G/H}\Big\langle f_{\alpha}(x),f_{\alpha}(x)\Big\rangle d\mu^{u}(xH)=\langle f_{\alpha},f_{\alpha}\rangle(u).
   \end{align*} 
   So, $W$ is an isometric module map. To prove image of $W$ is dense, let $\Lambda$ be a finite set in $I$ and $\eta_{i} \in \mathcal{F}_{0}^{\sigma_{i}}(G),i\in I$, define $\eta:G\to  \mathcal{H}^{\sigma}$ as $(\eta(x))_{i}= \eta_{i}(x)$ when $i\in \Lambda$, else $(\eta(x))_{i}=0$ . Clearly, $\eta \in \mathcal{F}^{\sigma}_{0}$ and $W\eta =\bigoplus\limits_{i\in I} \eta'_{i}$, where $\eta'_{i}=\eta_{i}$ when $i\in \Lambda$, else $\eta'_{i}\equiv 0$. By density of $\bigoplus\limits_{i\in I}\mathcal{F}_{0}^{\sigma_{i}}(G)$ in $\bigoplus\limits_{i\in I}\mathcal{F}^{\sigma_{i}}(G)$, result follows. Now, it is clear that $\mathcal{F}^{\sigma}(G)$ and  $\bigoplus\limits_{i\in I}\mathcal{F}^{\sigma_{i}}(G)$ are unitarily equivalent modules and hence the corresponding  Hilbert bundles are equivalent. Note that the morphism induced by $W$ intertwines with the induced representations.
   \end{proof} 
   Suppose $(\mathcal{H}^{\sigma},\sigma)$ is a continuous unitary representation of a groupoid. Then its conjugate representation $(\mathcal{H}^{\overline{\sigma}},\overline{\sigma})$ can be defined where $\mathcal{H}^{\overline{\sigma}}$ is the continuous field of Hilbert spaces $\{\overline{\mathcal{H}^{\sigma}_{u}}\}_{u\in G^{0}}$ and $\overline{\sigma}(x)f_{v}= f_{\sigma(x)v}, v\in \mathcal{H}^{\sigma}_{d(x)}$ and $f_{v}\in \mathcal{H}^{\sigma*}_{d(x)}$. If $\Delta$ is a Hilbert module over $C_{0}(G^{0})$, then we can define another Hilbert module $\overline{\Delta}=\{f_{\xi}:\xi \in \Delta\}$ with $C_{0}(G^{0})$-valued innerproduct $\langle f_{\xi},f_{\eta}\rangle(u)=\overline{\langle \xi,\eta \rangle}(u)$ and conjugate action of $C_{0}(G^{0}), i.e , af_{\xi}=f_{\bar{a}\xi}, a\in C_{0}(G^{0})$. Note that the fibers of the continuous field of Hilbert space corresponding to $\overline{\Delta}$ is the dual of the fibers corresponding to that of $\Delta$. 
   \begin{thm}
       Suppose $\sigma$ is a continuous unitary representation of a closed wide subgroupoid $H$, then 
       $$ (\overline{ind_{H}^{G}(\sigma),\mu})= (ind_{H}^{G}(\overline{\sigma}),\mu).$$
   \end{thm}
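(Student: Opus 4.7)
The plan is to exhibit the natural pointwise conjugation map $\Phi : \mathcal{F}_0^{\sigma}(G) \to \mathcal{F}_0^{\overline{\sigma}}(G)$ defined by $\Phi(f)(x) = f_{f(x)}$, where $f_v$ denotes the conjugate vector in $\overline{\mathcal{H}^{\sigma}_{d(x)}} = \mathcal{H}^{\overline{\sigma}}_{d(x)}$ in the notation introduced just before the statement, and to show that $\Phi$ extends to a unitary module isomorphism from $\overline{\mathcal{F}^{\sigma}(G)}$ onto $\mathcal{F}^{\overline{\sigma}}(G)$ that intertwines the two left regular representations.

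First I would verify that $\Phi(f)$ lies in $\mathcal{F}_0^{\overline{\sigma}}(G)$. Continuity and compactness of $q_H(\mathrm{supp}\,\Phi(f)) = q_H(\mathrm{supp}\,f)$ are immediate from the continuous Hilbert bundle structure on $\mathcal{H}^{\overline{\sigma}}$. The equivariance follows directly from the definition of $\overline{\sigma}$: for $(x,\xi) \in G^{2}$ with $\xi \in H$,
$$\Phi(f)(x\xi) = f_{f(x\xi)} = f_{\sigma(\xi^{-1})f(x)} = \overline{\sigma}(\xi^{-1})\, f_{f(x)} = \overline{\sigma}(\xi^{-1})\Phi(f)(x).$$

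Next I would check that $\Phi$ is a surjective isometry compatible with the conjugate $C_{0}(G^{0})$-module structure. Surjectivity onto $\mathcal{F}_0^{\overline{\sigma}}(G)$ is clear by inverting the map fiberwise. The inner product computation
$$\langle \Phi(f), \Phi(g)\rangle(u) = \int_{G/H}\langle f_{f(x)}, f_{g(x)}\rangle_{\mathcal{H}^{\overline{\sigma}}_{d(x)}}\,d\mu^{u}(xH) = \overline{\langle f, g\rangle(u)}$$
matches the $C_{0}(G^{0})$-valued inner product on $\overline{\mathcal{F}^{\sigma}(G)}$, and the identity $\Phi(af) = \bar{a}\cdot\Phi(f)$ holds under the convention $a \cdot f_{\xi} = f_{\bar{a}\xi}$. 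Hence $\Phi$ extends by continuity to a unitary isomorphism of Hilbert $C_{0}(G^{0})$-modules, and by \cite[Theorem $3.5$]{lance1995hilbert} together with the correspondence used in Section $3$ via \cite[Theorem $4.2.4$]{bos2007groupoids}, this descends to an equivalence of the associated continuous fields of Hilbert spaces.

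Finally, the intertwining property follows at once from the formula for $L(x)$ given in Proposition $3.6$:
$$\Phi(L(x)f)(y) = f_{(L(x)f)(y)} = f_{f(x^{-1}y)} = \Phi(f)(x^{-1}y) = (L(x)\Phi(f))(y),$$
which establishes the claimed equality of representations. No substantive obstacle is anticipated; the content is essentially to confirm that the abstractly defined conjugate Hilbert module $\overline{\mathcal{F}^{\sigma}(G)}$ is canonically identified with the concretely constructed module $\mathcal{F}^{\overline{\sigma}}(G)$ via pointwise conjugation, after which the intertwining and the bundle equivalence are formal.
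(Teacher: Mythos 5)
Your proposal is correct and follows essentially the same route as the paper: the paper also defines the pointwise conjugation map $W\colon f_{\xi}\mapsto\overline{\xi}$ with $\overline{\xi}(x)=f_{\xi(x)}$, checks it is an inner-product-preserving module map from $\overline{\mathcal{F}^{\sigma}}(G)$ to $\mathcal{F}^{\overline{\sigma}}(G)$, and verifies the same intertwining identity with the left regular representations.
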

  \begin{proof}
      Let $\pi= (ind_{H}^{G}(\sigma),\mu)$. For $ \xi \in \mathcal{F}_{0}^{\sigma}(G)$, we can define $\overline{\xi}:G \to \mathcal{H}^{\overline{{\sigma}}}$ as $\overline{\xi}(x)= f_{\xi(x)}$. Clearly, $\overline{\xi} \in \mathcal{F}_{0}^{\overline{\sigma}}(G)$. Then the map $W: f_{\xi} \to \overline{\xi}$ is a module map from $\overline{\mathcal{F}_{0}^{\sigma}}(G) \subset \overline{\mathcal{F}^{\sigma}}(G)$ to $\mathcal{F}_{0}^{\overline{\sigma}}(G)$ which preserves the $C_{0}(G^{0})$-valued inner product. Also, $W$ induces a continuous map between the continuous fields of Hilbert space corresponding to $\overline{\mathcal{F}^{\sigma}}(G)$ denoted as $\overline{\mathcal{F}}$ and $\Bar{\mathcal{F}^{\overline{\sigma}}_{G}}$ such that 
      \begin{align*}
         ( W_{r(x)}\overline{\pi}(x)f_{\eta^{d(x)}})(y)&= (W_{r(x)}f_{\pi(x)\eta^{d(x)}})(y)= \overline{\pi(x)\eta^{d(x)}}(y)= \overline{\eta}(x^{-1}y)\\&= (ind_{H}^{G}(\overline{\sigma})(x))\overline{\eta}^{d(x)}(y)=(ind_{H}^{G}(\overline{\sigma})(x))W_{d(x)}f_{\eta^{d(x)}}(y). \qedhere
      \end{align*} \end{proof}
   \section{Induction in Stages}
   Let  $K$ and $H$ be closed wide subgroupoids such that $K\subset H \subset G$. Let $\{\mu_{G}^{u}\}_{u\in G^{0}}$  and $\{\mu_{H}^{u}\}_{u\in G^{0}}$ be full  $r_{G^{0}}$- systems over $G/H$ and $H/K$ respectively.  Then there exist a full $r_{G^{0}}$-system $\{\gamma^{u}\}_{u\in G^{0}}$ on $G/K$ as :
   $$ \int_{G/K} f d\gamma^{u}(xK)= \int_{G/H}\int_{H/K}f(xhK)d\mu^{d(x)}_{H}(hK)d\mu_{G}^{u}(xH).$$ 
   Let  $\sigma$ be a continuous unitary representation of $K$. We prove that $\left(ind_{K}^{G}(\sigma), \gamma \right)$ is unitarily equivalent to $\left(ind_{H}^{G}\left(ind_{K}^{H}(\sigma),\mu_{H}\right),\mu_{G}\right)$. Let $\mathcal{F}_{0}^{\sigma}(H^{u})$ be the restriction of functions in $\mathcal{F}_{0}^{\sigma}(H)$ to $G^{u}\cap H= H^{u}$.
   \begin{lem}
       For $\xi \in \mathcal{F}_{0}^{\sigma}(G)$, $x\in G$, define $\Phi\xi(x) \in \mathcal{F}_{0}^{\sigma}(H^{d(x)})$ as:
       $$ (\Phi\xi(x))(h)= \xi(xh),~~ h\in H^{d(x)}.$$
       Then the mapping $\Phi\xi: x\to \Phi\xi(x)$ is contained in $\mathcal{F}_{0}^{ind_{K}^{H}}(G)$.
   \end{lem}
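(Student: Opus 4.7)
The plan is to verify the four defining properties of membership of $\Phi\xi$ in $\mathcal{F}_0^{ind_K^H(\sigma)}(G)$: for each $x\in G$, the element $\Phi\xi(x)$ lies in the fiber $\mathcal{F}^{d(x)}$ of $\bar{\mathcal{F}}^\sigma_H$; the $H$-equivariance $\Phi\xi(x\eta)=(ind_K^H\sigma)(\eta^{-1})\Phi\xi(x)$ holds for $(x,\eta)\in G^2$ with $\eta\in H$; the set $q_H(supp(\Phi\xi))$ is compact; and $\Phi\xi$ is continuous as a section of the bundle $\bar{\mathcal{F}}^\sigma_H$ pulled back along $d:G\to G^0$.

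For the pointwise fiber condition, the map $h\mapsto \xi(xh)$ is continuous on $H^{d(x)}$ with values in $\mathcal{H}^\sigma_{d(h)}$, and it inherits the $K$-equivariance $\Phi\xi(x)(hk)=\sigma(k^{-1})\Phi\xi(x)(h)$ directly from that of $\xi$. Its support in $H^{d(x)}$, pushed through the injection $H^{d(x)}/K\hookrightarrow G/K$ given by $hK\mapsto xhK$, lands inside the compact set $q_K(supp(\xi))$, so $q_K^H(supp(\Phi\xi(x)))$ is compact. Applying Proposition 3.2 to the pair $(H,K)$ yields $\alpha\in C_c(H^{d(x)},\mathcal{H}^\sigma)$ with $\Phi\xi(x)=f_\alpha$, and extending $\alpha$ to $\tilde\alpha\in C_c(H,\mathcal{H}^\sigma)$ via Lemma 3.5 exhibits $\Phi\xi(x)=f_{\tilde\alpha}|_{H^{d(x)}}\in \mathcal{F}_0^\sigma(H^{d(x)})\subset \mathcal{F}^{d(x)}$.

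The equivariance is an immediate unwinding using the formula $(L^H(\eta^{-1})F)(h)=F(\eta h)$ from Proposition 3.6 applied to $K\subset H$: for $h\in H^{d(\eta)}$,
$$\Phi\xi(x\eta)(h)=\xi(x\eta h)=\Phi\xi(x)(\eta h)=\bigl((ind_K^H\sigma)(\eta^{-1})\Phi\xi(x)\bigr)(h).$$
For the support condition, $\Phi\xi(x)\neq 0$ forces $x\in supp(\xi)\cdot H$, so $q_H(supp(\Phi\xi))\subset q_H(supp(\xi))$, and the latter is the image of the compact set $q_K(supp(\xi))$ under the continuous surjection $G/K\to G/H$.

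The main obstacle is continuity of the section $\Phi\xi$. By the standard characterization for continuous fields of Hilbert spaces, it is enough to show that $x\mapsto\|\Phi\xi(x)\|^2_{\mathcal{F}^{d(x)}}$ is continuous and that $x\mapsto \langle \Phi\xi(x),t_F(d(x))\rangle$ is continuous for $F$ ranging over a total subfamily of $\mathcal{F}^\sigma(H)$, for which I would take the set $\mathcal{E}(C_c(H),C_0(H^0,\mathcal{H}^\sigma))$ produced by Lemma 3.7. Both quantities can be written as integrals against $\mu_H^{d(x)}$ of jointly continuous scalar integrands on $G\times H/K$, and the support analysis above confines these integrands, for $x$ in any compact set, to a fixed compact subset of $H/K$. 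Continuity in $x$ then follows from the parametric-integral argument used throughout the paper, namely (the proof of) Lemma 3.12 of \cite{bos2011continuous} applied to the fully equivariant system $\mu_H$. Combining the two continuities gives continuity of $\Phi\xi$ into the total space of $\bar{\mathcal{F}}^\sigma_H$, completing the verification that $\Phi\xi\in\mathcal{F}_0^{ind_K^H(\sigma)}(G)$.
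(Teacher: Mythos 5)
Your proposal is correct and follows essentially the same route as the paper: pointwise membership of $\Phi\xi(x)$ in the fiber via Proposition 3.2 and Lemma 3.5, the same one-line equivariance computation, the same support estimate, and continuity checked by pairing against a total family of sections together with continuity of the norm, all reduced to parametric integrals over $H/K$. The only (harmless) divergences are cosmetic: you test against $\mathcal{E}(C_c(H),C_0(H^0,\mathcal{H}^{\sigma}))$ from Lemma 3.7 where the paper uses the sections $t_F$ with $F\in\mathcal{F}_0^{\sigma}(H)$, and your phrasing of the support condition as the image of the compact set $q_K(supp(\xi))$ under the continuous map $G/K\to G/H$ is in fact a slightly cleaner justification than the paper's.
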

   \begin{proof}
       Since $\xi \in \mathcal{F}_{0}^{\sigma}(G)$, by extending a suitable function in $C_{c}(H^{d(x)},\mathcal{H}^{\sigma})$ to $H$ using Lemma $3.4$ , we can easily see that $\Phi\xi(x) \in \mathcal{F}_{0}^{\sigma}(H^{d(x)})$. Let $\rho= ind_{K}^{H}(\sigma)$, then
\begin{align*}
   \Phi\xi(xh_{0})(h)=\xi(xh_{0}h)=\Phi\xi(x)(h_{0}h)= \rho(h_{0}^{-1})(\Phi\xi(x))(h)
\end{align*}
   We can easily see that $supp(\Phi\xi)$ is in $q_{H}(C)$ where $C$ is a compact set such that $q_{K}(C)= supp(\xi)$.  
Suppose $\Bar{\mathcal{F}}^{\sigma}_{H}$ denotes  the continuous field of Hilbert spaces over which representation $\rho$ acts. For $F\in \mathcal{F}_{0}^{\sigma}(H)$, $t_{F}:G^{0} \to \Bar{\mathcal{F}}^{\sigma}_{H}, t_{F}(u)= F_{|_{H^{u}}}$, forms a dense set in  $C_{0}(G^{0},\Bar{\mathcal{F}}^{\sigma}_{H})$.
\begin{align*}
 \langle \Phi\xi(x),t_{F}(d(x))\rangle= \int_{H/K} \langle\Phi\xi(x)(h),F(h)\rangle d\mu^{d(x)}_{H}(hK) =  \int_{H/K} \langle\xi(xh),F(h)\rangle d\mu^{d(x)}_{H}(hK)
\end{align*}
   Using the continuity of $\xi, F$ and arguments similar to \cite[Lemma 3.2]{bos2011continuous}, we can say $x\to \langle \Phi\xi(x),t_{F}(d(x))\rangle $ is continuous. Similarly,
   $$  \langle \Phi\xi(x),\Phi\xi(x)\rangle = \int_{H/K} \langle \xi(xh),\xi(xh)\rangle d\mu^{d(x)}_{H}(hK)$$
    is continuous.
    Hence, $\Phi\xi \in \mathcal{F}_{0}^{ind_{K}^{H}}(G)$.
   \end{proof}
   \begin{lem}
       The module map, $\Phi: \mathcal{F}_{0}^{\sigma}(G) \to \mathcal{F}_{0}^{ind_{K}^{H}}(G)$ is an isometry.
   \end{lem}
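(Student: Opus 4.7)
The plan is to show that $\Phi$ preserves the $C_{0}(G^{0})$-valued inner product; since the norm on a Hilbert $C_{0}(G^{0})$-module is determined by this inner product, isometry will follow. The key ingredient is the Fubini-type identity built into the very definition of the composite measure system $\gamma$.

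First I would quickly check the $C_{0}(G^{0})$-linearity (the ``module map'' claim). For $b\in C_{0}(G^{0})$, $\xi\in \mathcal{F}_{0}^{\sigma}(G)$, $x\in G$, $h\in H^{d(x)}$, one has $r(xh)=r(x)$, so
\[
(\Phi(b\xi)(x))(h)=b(r(xh))\xi(xh)=b(r(x))\,\Phi\xi(x)(h),
\]
matching the module action on $\mathcal{F}_{0}^{ind_{K}^{H}(\sigma)}(G)$ defined in Lemma~3.3.

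Next I would unpack $\langle \Phi\xi,\Phi\eta\rangle(u)$ using the definition of the inner product on $\mathcal{F}^{ind_{K}^{H}(\sigma)}(G)$ (applied with representation $\rho=ind_{K}^{H}(\sigma)$ over the continuous field $\bar{\mathcal{F}}_{H}^{\sigma}$, and the measure system $\mu_{G}$ on $G/H$). This gives
\[
\langle \Phi\xi,\Phi\eta\rangle(u)=\int_{G/H}\langle \Phi\xi(x),\Phi\eta(x)\rangle_{\mathcal{F}^{d(x)}}\,d\mu_{G}^{u}(xH),
\]
while the fiber inner product, being the specialization at $d(x)$ of the $C_{0}(G^{0})$-valued inner product on $\mathcal{F}^{\sigma}(H)$, equals
\[
\langle \Phi\xi(x),\Phi\eta(x)\rangle_{\mathcal{F}^{d(x)}}=\int_{H/K}\langle \xi(xh),\eta(xh)\rangle_{\mathcal{H}^{\sigma}_{d(h)}}\,d\mu_{H}^{d(x)}(hK).
\]

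To finish I would observe that the map $y\mapsto \langle \xi(y),\eta(y)\rangle$ descends to a continuous compactly supported function on $G/K$: if $k\in K$ with $r(k)=d(y)$, unitarity of $\sigma(k)$ gives $\langle \xi(yk),\eta(yk)\rangle=\langle \sigma(k^{-1})\xi(y),\sigma(k^{-1})\eta(y)\rangle=\langle \xi(y),\eta(y)\rangle$. Substituting the fiber formula into the outer integral and invoking the defining identity for $\gamma^{u}$ yields
\[
\langle \Phi\xi,\Phi\eta\rangle(u)=\int_{G/H}\!\int_{H/K}\!\langle \xi(xh),\eta(xh)\rangle\,d\mu_{H}^{d(x)}(hK)\,d\mu_{G}^{u}(xH)=\int_{G/K}\!\langle \xi(y),\eta(y)\rangle\,d\gamma^{u}(yK)=\langle \xi,\eta\rangle(u).
\]
Taking $\eta=\xi$ and then the supremum over $u\in G^{0}$ gives $\|\Phi\xi\|=\|\xi\|$, proving isometry.

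There is essentially no analytic obstacle here; continuity and compact support of the relevant functions (already established in Lemma~4.1) make every interchange of integrals legitimate. The only thing to be careful about is bookkeeping: keeping straight which representation's Hilbert field hosts each inner product, and in particular remembering that the fiber inner product on $\mathcal{F}^{d(x)}$ is itself an integral over $H/K$, which is precisely what makes the composition formula for $\gamma$ do all the work.
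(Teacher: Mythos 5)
Your argument is correct and is essentially identical to the paper's proof: both unwind $\langle\Phi\xi,\Phi\xi\rangle(u)$ as an iterated integral over $G/H$ and $H/K$ and then collapse it via the defining formula for $\gamma^{u}$. The extra checks you include (the $C_{0}(G^{0})$-linearity and the $K$-invariance of $y\mapsto\langle\xi(y),\eta(y)\rangle$) are sound and only make explicit what the paper leaves implicit.
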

   \begin{proof}
   \begin{align*}
      \langle \Phi\xi,\Phi\xi \rangle(u)&= \int_{G/H} \langle \Phi\xi(x),\Phi\xi(x) \rangle d\mu_{G}^{u}(xH)\\
      &=  \int_{G/H}\int_{H/K} \langle \xi(xh),\xi(xh) \rangle d\mu_{H}^{d(x)}(hK)d\mu_{G}^{u}(xH)\\
      &= \int_{G/K}\langle \xi(x),\xi(x) \rangle d\gamma^{u}(xK)= \langle \xi,\xi \rangle(u). \qedhere
   \end{align*}

   \end{proof}
   \begin{lem}
    $\Phi(\mathcal{F}_{0}^{\sigma}(G))$ is dense in $\mathcal{F}_{0}^{ind_{K}^{H}}(G)$.
   \end{lem}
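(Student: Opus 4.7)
The plan is to invoke Lemma~3.7 to reduce the problem to approximating specific elementary functions, and then to exhibit an explicit preimage under $\Phi$. Applying Lemma~3.7 with the subgroupoid $H \subset G$ and the representation $ind_{K}^{H}(\sigma)$ on $\overline{\mathcal{F}}_{H}^{\sigma}$, the set $\mathcal{E}(C_{c}(G), C_{0}(G^{0}, \overline{\mathcal{F}}_{H}^{\sigma}))$ is total in $\mathcal{F}^{ind_{K}^{H}}(G)$. Because sections of the form $t_{F}$ with $F \in \mathcal{F}_{0}^{\sigma}(H)$ are dense in $C_{0}(G^{0}, \overline{\mathcal{F}}_{H}^{\sigma})$ and the norm estimate in Lemma~3.7 makes $\mathcal{E}(g, \cdot)$ continuous in its second argument, the linear span of $\{\mathcal{E}(g, t_{F}): g \in C_{c}(G),\, F \in \mathcal{F}_{0}^{\sigma}(H)\}$ is already dense in $\mathcal{F}^{ind_{K}^{H}}(G)$. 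Since $\Phi$ is linear, it suffices to realise each $\mathcal{E}(g, t_{F})$ as $\Phi \xi$ for some $\xi \in \mathcal{F}_{0}^{\sigma}(G)$.

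Given such $g$ and $F$, I would define
\[
\xi_{g, F}(y) \;=\; \int_{H^{d(y)}} g(yh)\, F(h^{-1})\, d\lambda_{H}^{d(y)}(h), \qquad y \in G.
\]
For $h \in H^{d(y)}$ one has $F(h^{-1}) \in \mathcal{H}^{\sigma}_{r(h)} = \mathcal{H}^{\sigma}_{d(y)}$, so the integrand lies in the correct fibre, and its $h$-support is compact for each fixed $y$ since $\mathrm{supp}(g)$ is compact. Continuity of $\xi_{g,F}$ follows exactly as in Proposition~3.2 via \cite[Lemma~3.12]{bos2011continuous}. To verify the $K$-covariance $\xi_{g,F}(y\kappa) = \sigma(\kappa^{-1}) \xi_{g,F}(y)$ for $\kappa \in K^{d(y)}$, substitute $h \mapsto \kappa^{-1} h$ in the defining integral and use left-invariance of $\lambda_{H}$ together with the identity $F(h^{-1}\kappa) = \sigma(\kappa^{-1}) F(h^{-1})$ that comes from $F \in \mathcal{F}_{0}^{\sigma}(H)$. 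Finally, $q_{K}(\mathrm{supp}(\xi_{g,F})) \subset q_{K}(\mathrm{supp}(g) \cdot \mathrm{supp}(F))$ is compact, so $\xi_{g,F} \in \mathcal{F}_{0}^{\sigma}(G)$.

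The identification $\Phi \xi_{g,F} = \mathcal{E}(g, t_{F})$ is the central computation. For $x \in G$ and $k \in H^{d(x)}$,
\[
\Phi \xi_{g,F}(x)(k) \;=\; \xi_{g,F}(xk) \;=\; \int_{H^{d(k)}} g(xkh)\, F(h^{-1})\, d\lambda_{H}^{d(k)}(h).
\]
Applying the Haar identity $\int \phi(kh)\, d\lambda_{H}^{d(k)}(h) = \int \phi(h')\, d\lambda_{H}^{r(k)}(h')$ with $\phi(h') = g(xh') F(h'^{-1} k)$, and using $r(k) = d(x)$, rewrites this as $\int_{H^{d(x)}} g(xh')\, F(h'^{-1} k)\, d\lambda_{H}^{d(x)}(h')$. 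Since $[ind_{K}^{H}(\sigma)(h')\, t_{F}(d(h'))](k) = F(h'^{-1} k)$, this is precisely $\mathcal{E}(g, t_{F})(x)(k)$, completing the argument.

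The main obstacle I anticipate is pinning down the correct explicit formula for the preimage $\xi_{g,F}$: once that is guessed, the equivariance check and the Haar-invariance manipulation that identifies $\Phi \xi_{g,F}$ with $\mathcal{E}(g, t_{F})$ go through cleanly, and the continuity and support checks merely duplicate those already carried out in Proposition~3.2 and Lemma~3.5.
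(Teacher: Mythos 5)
Your proposal is correct and follows essentially the same strategy as the paper: reduce via Lemma 3.7 to elementary functions $\mathcal{E}(g,t_F)$ and exhibit an explicit convolution-type preimage under $\Phi$. The only difference is one of packaging: the paper further decomposes $F$ as $\epsilon_{1}(f_{1},\xi)$ and realises the preimage as the elementary function $\epsilon(f,\xi)$ with $f(x)=\int_{H}f_{1}(h^{-1})f_{2}(xh)\,d\lambda_{H}^{d(x)}(h)$, whereas your $\xi_{g,F}$ treats arbitrary $F\in\mathcal{F}_{0}^{\sigma}(H)$ directly, and substituting $F=\epsilon_{1}(f_{1},\xi)$ into your formula recovers the paper's preimage by Fubini.
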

   \begin{proof}
      Let $\rho= ind_{K}^{H}(\sigma)$. Define,
       \begin{align*}
           \epsilon_{1}: C_{c}(H) &\times C_{0}(G^{0},\mathcal{H}^{\sigma})\to \mathcal{F}_{0}^{\sigma}(H)\\
           &\epsilon_{1}(f_{1},\xi)(h)=\int_{K}f_{1}(hk)\sigma(k)\xi(d(k))d\lambda_{K}^{d(h)}(k)\\
           \epsilon_{2}: C_{c}(G) &\times C_{0}(G^{0},\Bar{\mathcal{F}}^\rho)\to \mathcal{F}_{0}^\rho(G)\\
           & \epsilon_{2}(f_{2},t_{F})(x)=\int_{H}f_{2}(xh)\rho(h)t_{F}(d(h))d\lambda_{H}^{d(x)}(h)
       \end{align*}
       Let $f_{1} \in C_{c}(H), f_{2} \in C_{c}(G)$, then define $f\in C_{c}(G)$ by 
       $$ f(x)= \int_{H} f_{1}(h^{-1})f_{2}(xh)d\lambda_{H}^{d(x)}(h).$$
       Then $\epsilon(f,\xi) \in \mathcal{F}^{\sigma}_{0}(G),$ where $\epsilon(f,\xi)(x)= \int_{K} f(xk)\sigma(k)\xi(d(k))d\lambda_{K}^{d(x)}(k)$.  
       Let $\epsilon_{1}(f_{1},\xi)= F$, then
       \begin{align*}
           \epsilon_{2}(f_{2},t_{F})(x)(h_{0})&=\left(\int_{H}f_{2}(xh)\rho(h)t_{F}(d(h))d\lambda_{H}^{d(x)}(h)\right)(h_{0})\\
           &= \int_{H}f_{2}(xh)\epsilon_{1}(f_{1},\xi)(h^{-1}h_{0})d\lambda_{H}^{d(x)}(h)\\
           &= \int_{H}\int_{K}f_{2}(xh)f_{1}(h^{-1}h_{0}k)\sigma(k)\xi(d(k))d\lambda_{K}^{d(h_{0})}(k)d\lambda_{H}^{d(x)}(h)
       \end{align*}
       Now,
       \begin{align*}
           \Phi(\epsilon(f,\xi))(x)(h_{0})&=\epsilon(f,\xi)(xh_{0})= \int_{K} f(xh_{0}k)\sigma(k)\xi(d(k))d\lambda_{K}^{d(h_{0})}(k)\\
           &= \int_{K}\int_{H}f_{1}(h^{-1})f_{2}(xh_{0}kh)\sigma(k)\xi(d(k))d\lambda_{H}^{d(k)}(h)d\lambda_{K}^{d(h_{0})}(k)\\
           &=\int_{K}\int_{H}f_{1}(h^{-1}h_{0}k)f_{2}(xh)\sigma(k)\xi(d(k))d\lambda_{H}^{r(h_{0})}(h)d\lambda_{K}^{d(h_{0})}(k)\\
           &= \int_{H}\int_{K}f_{1}(h^{-1}h_{0}k)f_{2}(xh)\sigma(k)\xi(d(k)) d\lambda_{K}^{d(h_{0})}(k)d\lambda_{H}^{d(x)}(h)\\
           &=\epsilon_{2}(f_{2},t_{F})(x)(h_{0})
       \end{align*} 
       By, Lemma $3.6$, the result follows.
   \end{proof}
   Using all the series of lemmas above, $\Phi$ extends uniquely to a surjective isometric module map from Hilbert module $\mathcal{F}^{\sigma}(G)$ to $\mathcal{F}^{ind_{K}^{H}(\sigma)}(G)$. By \cite[Theorem $3.5$]{lance1995hilbert}, $\mathcal{F}^{\sigma}(G)$ and $\mathcal{F}^{ind_{K}^{H}(\sigma)}(G)$ are unitarily equivalent. Also, $\Phi$ induces a morphism between the corresponding continuous field of Hilbert spaces $\Bar{\mathcal{F}}^{\sigma}_{G}$ and $\Bar{\mathcal{F}}_{G}^{ind_{K}^{H}(\sigma)}$ which intertwines with the representations  $\left(ind_{K}^{G}(\sigma), \gamma \right)$ and  $\left(ind_{H}^{G}\left(ind_{K}^{H}(\sigma),\mu_{H}\right),\mu_{G}\right)$.
    Thus the following theorem is proved.
    \begin{thm}
        Let $K$ and $H$ be closed wide subgroupoids of $G$ such that $K\subseteq H$, and $\sigma$ be a continuous unitary representation of $K$. Then the representations $\left(ind_{K}^{G}(\sigma), \gamma \right)$ and $\left(ind_{H}^{G}\left(ind_{K}^{H}(\sigma),\mu_{H}\right),\mu_{G}\right)$ are unitarily equivalent.
    \end{thm}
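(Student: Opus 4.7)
The plan is to construct an explicit intertwining isometric isomorphism between $\mathcal{F}^{\sigma}(G)$ and $\mathcal{F}^{ind_{K}^{H}(\sigma)}(G)$ at the level of the underlying Hilbert $C_{0}(G^{0})$-modules, and then pass to the associated continuous fields of Hilbert spaces using \cite[Theorem $3.5$]{lance1995hilbert}. The natural candidate is the map $\Phi$ defined on $\xi\in\mathcal{F}_{0}^{\sigma}(G)$ by $(\Phi\xi(x))(h)=\xi(xh)$ for $h\in H^{d(x)}$, which is suggested by rewriting an integral over $G/K$ as an iterated integral over $G/H$ and $H/K$ via the very definition of $\gamma$.

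First I would check that $\Phi\xi$ actually lands in $\mathcal{F}_{0}^{ind_{K}^{H}(\sigma)}(G)$. This requires three things: that for each $x$, the function $h\mapsto \xi(xh)$ defines an element of the fiber $\mathcal{F}^{\sigma}(H^{d(x)})$ of $\overline{\mathcal{F}}^{\sigma}_{H}$ (using Lemma $3.5$ to extend restrictions from $H^{d(x)}$ to $H$), that $\Phi\xi$ satisfies the correct covariance $\Phi\xi(xh_{0})=\rho(h_{0}^{-1})\Phi\xi(x)$ where $\rho=ind_{K}^{H}(\sigma)$ (an immediate algebraic check), and that $x\mapsto\Phi\xi(x)$ is continuous with $q_{H}$-compact support. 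Continuity would be verified by testing against continuous sections $t_{F}$ of the form provided by $F\in\mathcal{F}_{0}^{\sigma}(H)$, reducing to the continuity of a double integral to which the techniques of \cite[Lemma $3.12$]{bos2011continuous} apply.

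Next I would verify that $\Phi$ is an isometric $C_{0}(G^{0})$-module map. This is the cleanest step: the module property is immediate from $b(r(xh))=b(r(x))$ for $h\in H$, and the isometry reduces by Fubini to the defining identity
\begin{equation*}
\int_{G/K} f\,d\gamma^{u}=\int_{G/H}\int_{H/K} f(xhK)\,d\mu_{H}^{d(x)}(hK)\,d\mu_{G}^{u}(xH),
\end{equation*}
applied to $f(xK)=\langle\xi(x),\xi(x)\rangle$, which is well-defined on $G/K$ by the $\sigma$-covariance of $\xi$.

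The main obstacle, and what deserves the most care, is density of the image of $\Phi$ in $\mathcal{F}_{0}^{ind_{K}^{H}(\sigma)}(G)$. The natural strategy is to use the totality of the $\mathcal{E}$-type elements from Lemma $3.7$ applied at both levels: for $f_{1}\in C_{c}(H)$, $f_{2}\in C_{c}(G)$ and $\xi\in C_{0}(G^{0},\mathcal{H}^{\sigma})$, I would form the level-two element $\mathcal{E}_{2}(f_{2},t_{\mathcal{E}_{1}(f_{1},\xi)})$ and show it equals $\Phi(\mathcal{E}(f,\xi))$ for a single $f\in C_{c}(G)$ obtained by convolving $f_{1}$ and $f_{2}$ along $H$, namely $f(x)=\int_{H}f_{1}(h^{-1})f_{2}(xh)\,d\lambda_{H}^{d(x)}(h)$. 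The computation is a Fubini swap of $h$ and $k$ integrals together with a translation invariance of $\lambda_{H}$; since the $\mathcal{E}_{2}$-elements are total in $\mathcal{F}^{ind_{K}^{H}(\sigma)}(G)$ by Lemma $3.7$, density follows. Once $\Phi$ extends to a unitary module isomorphism, the intertwining property $\Phi\circ L^{G}_{G,\sigma}(x)=L^{G}_{G,ind_{K}^{H}\sigma}(x)\circ\Phi$ is immediate from the definition $(L(x)\xi)(y)=\xi(x^{-1}y)$, and the induced morphism of continuous fields intertwines the two representations, completing the proof.
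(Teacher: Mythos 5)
Your proposal is correct and follows essentially the same route as the paper: the same map $\Phi$ defined by $(\Phi\xi(x))(h)=\xi(xh)$, the same three verifications (membership in $\mathcal{F}_{0}^{ind_{K}^{H}(\sigma)}(G)$, isometry via the defining iterated-integral identity for $\gamma$, and density via the $\mathcal{E}$-type elements with the convolution $f(x)=\int_{H}f_{1}(h^{-1})f_{2}(xh)\,d\lambda_{H}^{d(x)}(h)$), followed by the appeal to \cite[Theorem $3.5$]{lance1995hilbert} and the immediate intertwining check. No substantive differences to report.
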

   \section{Tensor products of induced representation}
   Suppose $G_{1}$ and $G_{2}$ are two locally compact second countable groupoids, $G_{1}\times G_{2}$ is again a groupoid having unit space $ G_{1}^{0}\times G_{2}^{0}$. The product and inverse are component-wise. If $\{\lambda^{u_{1}}\}_{u_{1}\in G_{1}^{0}}$ and $\{\lambda^{u_{2}}\}_{u_{2}\in G_{2}^{0}}$ are the haarsystem of $G_{1}$ and $G_{2}$ respectively, then $\{\lambda^{u_{1}}\times \lambda^{u_{2}}\}_{(u_{1},u_{2})\in G_{1}^{0}\times G_{2}^{0}}$ forms a haarsystem of $G_{1}\times G_{2}$.

   If $(\mathcal{H}^{1},\pi_{1})$ and $(\mathcal{H}^{2},\pi_{2})$ are two representations of $G_{1}$ and $G_{2}$ respectively, we can define outer tensor product $\pi_{1}\times \pi_{2}$ of $G_{1} \times G_{2}$ acting on $\mathcal{H}^{1}\otimes \mathcal{H}^{2}$ such that
   
   For $(x,y)\in G_{1}\times G_{2}$, $(\pi_{1}\times \pi_{2})(x,y): \mathcal{H}^{1}_{d(x)}\otimes \mathcal{H}^{2}_{d(y)} \to \mathcal{H}^{1}_{r(x)}\otimes \mathcal{H}^{2}_{r(y)} $ 
   $$ (\pi_{1}\times \pi_{2})(x,y)v_{1}\otimes v_{2}=\pi_{1}(x)v_{1} \otimes  \pi_{2}(y)v_{2}. $$
   In this section, we prove the groupoid version of Mackey's tensor product theorem.

   Let $\sigma_{1}$ and $\sigma_{2}$ be two representations of closed wide subgroupoids $H_{1}$ and $H_{2}$ respectively. The exterior tensor product $\mathcal{F}^{\sigma_{1}}(G_{1})\otimes \mathcal{F}^{\sigma_{2}}(G_{2})$ is a $C_{0}(G_{1}^{0})\otimes_{*} C_{0}(G_{2}^{0})$-module where $\otimes_{*}$ denote the completion of algebraic tensor product $C_{0}(G_{1}^{0})\otimes C_{0}(G_{2}^{0})$ under the spatial $C^{*}$-norm. More details can be referred to in \cite{lance1995hilbert}. Since $C_{0}(G_{1}^{0})$ is commutative, it is nuclear, and hence the $C^*$-norm is unique. Using \cite[Theorem $6.4.17$]{murphy2014c} , $C_{0}(G_{1}^{0})\otimes_{*} C_{0}(G_{2}^{0})\cong C_{0}(G_{1}^{0}\times G_{2}^{0})$. Also we can easily see that the pre-Hilbert $C_{0}(G_{1}^{0}\times G_{2}^{0})$-module  $\mathcal{F}^{\sigma_{1}}_{0}(G_{1})\otimes \mathcal{F}^{\sigma_{2}}_{0}(G_{2})$ is dense in $\mathcal{F}^{\sigma_{1}}(G_{1})\otimes \mathcal{F}^{\sigma_{2}}(G_{2})$.

   Let $\{\mu_{1}^{u}\times\mu_{2}^{v}\}_{(u,v)\in G_{1}^{0}\times G_{2}^{0}}$ is the equivariant $r_{G^{0}\times G^{0}}$-system on the left $G_{1}\times G_{2}$-space $(G_{1}\times G_{2})/(H_{1}\times H_{2})\cong G_{1}/H_{1}\times G_{2}/H_{2}$, where $\mu_{1}^{u}$ and $\mu_{2}^{v}$ are equivariant $r_{G^{0}}$- systems of $G_{1}/H_{1}$ and $G_{2}/H_{2}$ respectively.
   
   For $(f_{1},f_{2})\in \mathcal{F}^{\sigma_{1}}_{0}(G_{1})\times \mathcal{F}^{\sigma_{2}}_{0}(G_{2})$, we define a function from $G_{1}\times G_{2}$ to $\mathcal{H}^{1}\otimes \mathcal{H}^{2}$ such that 
   $$ (f_{1},f_{2})(x,y)= f_{1}(x)\otimes f_{2}(y).$$
   We can easily see that $(f_{1},f_{2}) \in \mathcal{F}^{\sigma_{1}\times \sigma_{2}}(G_{1}\times G_{2})$.
    Now, define a module map $\Phi$ from $\mathcal{F}^{\sigma_{1}}_{0}(G_{1})\otimes \mathcal{F}^{\sigma_{2}}_{0}(G_{2}) $ to $\mathcal{F}^{\sigma_{1}\times \sigma_{2}}(G_{1}\times G_{2})$ as
    $$ \Phi((f_{1}\otimes f_{2}))(x,y)= f_{1}(x)\otimes f_{2}(y).$$

    \begin{lem}
        The module map $\Phi$ preserves the $C_{0}(G_{1}^{0}\times G_{2}^{0})$- valued innerproduct.
    \end{lem}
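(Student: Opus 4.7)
The plan is to verify the identity on elementary tensors and then extend by sesquilinearity (and continuity) to the full algebraic/completed tensor product. First I would recall that on the exterior tensor product $\mathcal{F}^{\sigma_{1}}(G_{1})\otimes \mathcal{F}^{\sigma_{2}}(G_{2})$, viewed as a Hilbert module over $C_{0}(G_{1}^{0})\otimes_{*}C_{0}(G_{2}^{0})\cong C_{0}(G_{1}^{0}\times G_{2}^{0})$, the inner product on simple tensors is defined so that
$$\bigl\langle f_{1}\otimes f_{2},\, g_{1}\otimes g_{2}\bigr\rangle(u,v)=\langle f_{1},g_{1}\rangle(u)\cdot \langle f_{2},g_{2}\rangle(v),$$
which is the standard formula from \cite{lance1995hilbert}.

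Next I would compute the right-hand inner product directly from the definition of the induced-module inner product on $\mathcal{F}^{\sigma_{1}\times\sigma_{2}}(G_{1}\times G_{2})$. Using the homeomorphism $(G_{1}\times G_{2})/(H_{1}\times H_{2})\cong G_{1}/H_{1}\times G_{2}/H_{2}$ together with the fact that the chosen equivariant system is the product $\mu_{1}^{u}\times \mu_{2}^{v}$, I expand
$$\bigl\langle \Phi(f_{1}\otimes f_{2}),\,\Phi(g_{1}\otimes g_{2})\bigr\rangle(u,v)=\iint\bigl\langle f_{1}(x)\otimes f_{2}(y),\,g_{1}(x)\otimes g_{2}(y)\bigr\rangle\,d(\mu_{1}^{u}\times\mu_{2}^{v}).$$
The inner-product identity $\langle a_{1}\otimes a_{2},b_{1}\otimes b_{2}\rangle=\langle a_{1},b_{1}\rangle\langle a_{2},b_{2}\rangle$ on fibre tensor products of Hilbert spaces separates the integrand into a product of factors depending only on $x$ and only on $y$.

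Then I would apply Fubini's theorem to split the double integral as a product of two one-variable integrals, which is legitimate because the integrand is continuous with compact support on $G_{1}/H_{1}\times G_{2}/H_{2}$ and the measures $\mu_{1}^{u}$, $\mu_{2}^{v}$ are Radon. This yields exactly $\langle f_{1},g_{1}\rangle(u)\cdot\langle f_{2},g_{2}\rangle(v)$, matching the formula from the first paragraph. Finally, extending by $C_{0}(G_{1}^{0}\times G_{2}^{0})$-sesquilinearity gives the claim on the full algebraic tensor product, and continuity propagates it to the completion.

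The step that requires the most care is the justification that the chosen equivariant $r_{(G_{1}\times G_{2})^{0}}$-system on $(G_{1}\times G_{2})/(H_{1}\times H_{2})$ is really the product system $\mu_{1}^{u}\times\mu_{2}^{v}$ under the natural identification; once this is in place, the rest of the argument reduces to the bilinearity of the tensor-product inner product and a routine application of Fubini, so I do not anticipate a serious obstacle.
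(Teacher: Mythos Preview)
Your proposal is correct and follows essentially the same route as the paper: both compute on simple tensors, use the identification $(G_{1}\times G_{2})/(H_{1}\times H_{2})\cong G_{1}/H_{1}\times G_{2}/H_{2}$ with the product measure $\mu_{1}^{u}\times\mu_{2}^{v}$ (which the paper fixes by assumption just before the lemma), split the fibrewise tensor inner product into a product, and apply Fubini to obtain $\langle f_{1},g_{1}\rangle(u)\,\langle f_{2},g_{2}\rangle(v)$. Your added remarks on sesquilinear extension and the justification of Fubini are fine but go slightly beyond what the paper writes out.
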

    \begin{proof}
    \begin{align*}
        & \langle\Phi((f_{1}\otimes f_{2})), \Phi((g_{1}\otimes g_{2}))\rangle(u,v)\\
        = & \iint_{\frac{G_{1}}{H_{1}}\times \frac{G_{2}}{H_{2}}}\langle\Phi((f_{1},f_{2}))(x,y),\Phi((g_{1},g_{2}))(x,y)\rangle d\mu_{1}^{u}(xH_{1})d\mu_{2}^{v}(yH_{2})\\
        = & \iint_{\frac{G_{1}}{H_{1}}\times \frac{G_{2}}{H_{2}}}\langle f_{1}(x)\otimes f_{2}(y),g_{1}(x)\otimes g_{2}(y)\rangle d\mu_{1}^{u}(xH_{1})d\mu_{2}^{v}(yH_{2})\\
        = & \iint_{\frac{G_{1}}{H_{1}}\times \frac{G_{2}}{H_{2}}}\langle f_{1}(x),g_{1}(x)\rangle \langle f_{2}(y), g_{2}(y)\rangle d\mu_{1}^{u}(xH_{1})d\mu_{2}^{v}(yH_{2})\\
        = & \int_{G_{1}/{H_{1}}} \langle f_{1}(x),g_{1}(x)\rangle d\mu_{1}^{u}(xH_{1}) \int_{G_{2}/{H_{2}}} \langle f_{2}(y), g_{2}(y)\rangle d\mu_{2}^{v}(yH_{2})\\
        = & \langle f_{1},g_{1}\rangle(u) \langle f_{2},g_{2}\rangle(v)= \langle f_{1}\otimes f_{2},g_{1}\otimes g_{2}\rangle (u,v).\qedhere
    \end{align*}        
    \end{proof}
    \begin{lem}
        Let $f_{i} \in C_{c}(G_{i})$ and $t_{i} \in C_{0}(G_{i}^{0},\mathcal{H}^{\sigma_{i}})$, $i=1,2$. Define $f\in C_{c}(G_{1}\times G_{2})$ by $f(x_{1},x_{2})=f_{1}(x_{1})f_{2}(x_{2})$. Then,
        $$ \mathcal{E}(f,t_{1}\otimes t_{2})(x_{1},x_{2})=\mathcal{E}(f_{1},t_{1})(x_{1})\otimes \mathcal{E}(f_{2},t_{2})(x_{2}).$$
    \end{lem}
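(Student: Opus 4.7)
The statement is a direct, essentially computational consequence of the product structure of $H_1\times H_2$ and its Haar system, together with the fact that the tensor product is bilinear and that $(\sigma_1\times\sigma_2)(h_1,h_2)=\sigma_1(h_1)\otimes\sigma_2(h_2)$ by the definition of the outer tensor product of representations given at the beginning of Section~5. My plan is to expand the left-hand side using the definition of $\mathcal{E}$ (recall $\mathcal{E}(f,t)(x)=\int f(xh)\sigma(h)t(d(h))\,d\lambda_{H}^{d(x)}(h)$), substitute the explicit product forms for $f$, the section $t_1\otimes t_2$, the representation $\sigma_1\times\sigma_2$, and the Haar system $\lambda_{H_1\times H_2}^{(u_1,u_2)}=\lambda_{H_1}^{u_1}\times\lambda_{H_2}^{u_2}$, and then apply Fubini to separate the double integral into a tensor product of single integrals.

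The first step is to record that on the product groupoid $G_1\times G_2$ the composable pairs are component-wise, so $(x_1,x_2)(h_1,h_2)=(x_1h_1,x_2h_2)$, and $d(x_1,x_2)=(d(x_1),d(x_2))$. Hence $f((x_1,x_2)(h_1,h_2))=f_1(x_1h_1)f_2(x_2h_2)$ and $(t_1\otimes t_2)(d(h_1),d(h_2))=t_1(d(h_1))\otimes t_2(d(h_2))$. Combining these with the outer tensor product rule,
\[
(\sigma_1\times\sigma_2)(h_1,h_2)\bigl(t_1(d(h_1))\otimes t_2(d(h_2))\bigr)=\sigma_1(h_1)t_1(d(h_1))\otimes\sigma_2(h_2)t_2(d(h_2)),
\]
the integrand in $\mathcal{E}(f,t_1\otimes t_2)(x_1,x_2)$ becomes the scalar $f_1(x_1h_1)f_2(x_2h_2)$ times the pure tensor displayed above.

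The second step is to invoke Fubini for the product Haar measure $\lambda_{H_1}^{d(x_1)}\times\lambda_{H_2}^{d(x_2)}$. This is legitimate because the integrand is continuous, compactly supported (as a function of $(h_1,h_2)$ through the supports of $f_1,f_2$), and takes values in the Hilbert space $\mathcal{H}^{\sigma_1}_{d(x_1)}\otimes\mathcal{H}^{\sigma_2}_{d(x_2)}$; the usual vector-valued Fubini theorem applies. Using bilinearity of $\otimes$, the iterated integral factors into
\[
\Bigl(\int_{H_1}f_1(x_1h_1)\sigma_1(h_1)t_1(d(h_1))\,d\lambda_{H_1}^{d(x_1)}(h_1)\Bigr)\otimes\Bigl(\int_{H_2}f_2(x_2h_2)\sigma_2(h_2)t_2(d(h_2))\,d\lambda_{H_2}^{d(x_2)}(h_2)\Bigr),
\]
which by definition is $\mathcal{E}(f_1,t_1)(x_1)\otimes\mathcal{E}(f_2,t_2)(x_2)$, as required.

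There is no real obstacle: the only subtlety is justifying the vector-valued Fubini step, but since the integrand is a continuous, compactly supported map into a tensor product of Hilbert spaces, this is routine (and can be reduced to the scalar case by pairing against elementary tensors of sections in $C_0(G_i^0,\mathcal{H}^{\sigma_i})$ if one prefers). The rest of the argument is pure bookkeeping of the product structure.
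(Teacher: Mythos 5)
Your proof is correct and follows essentially the same route as the paper: expand the definition of $\mathcal{E}$ using the product structure of the Haar system and the outer tensor product, then factor the double integral via Fubini. The only cosmetic difference is that the paper carries out the Fubini/factorization step in the weak form you mention parenthetically, by pairing against elementary tensors $t_1'(d(x_1))\otimes t_2'(d(x_2))$ and comparing inner products, rather than manipulating the vector-valued integral directly.
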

    \begin{proof}
        Fix $(x_{1},x_{2}) \in G_{1}\times G_{2}$.  
        For all $t_{1}' \in C_{0}(G_{1}^{0},\mathcal{H}^{\sigma_{1}})$ and $t_{2}' \in C_{0}(G_{2}^{0},\mathcal{H}^{\sigma_{2}})$,
\begin{align*}
    & \Big\langle \mathcal{E}(f,t_{1}\otimes t_{2})(x_{1},x_{2}),t_{1}'(d(x_{1}))\otimes t_{2}'(d(x_{2}))\Big\rangle \\
     = & \iint\limits_{H_{1}\times H_{2}}f_{1}(x_{1}h_{1})f_{2}(x_{2}h_{2})\Big\langle\sigma_{1}(h_{1})t_{1}(d(h_{1})),t_{1}'(d(x_{1}))\Big\rangle\Big\langle\sigma_{2}(h_{2})t_{2}(d(h_{2})),t_{2}'(d(x_{2}))\Big\rangle  \\ & \hspace{100mm} d\lambda_{H_{1}}^{d(x_{1})}(h_{1})d\lambda_{H_{2}}^{d(x_{2})}(h_{2})\\
     = & \Big \langle\mathcal{E}(f_{1},t_{1})(x_{1}),t_{1}'(d(x_{1}))\Big\rangle \Big \langle\mathcal{E}(f_{2},t_{2})(x_{2}),t_{2}'(d(x_{2}))\Big\rangle\\
     = & \Big\langle \mathcal{E}(f_{1},t_{1})(x_{1})\otimes \mathcal{E}(f_{2},t_{2})(x_{2}),t_{1}'(d(x_{1}))\otimes t_{2}'(d(x_{2}))\Big\rangle. \qedhere 
\end{align*}  
\end{proof}
\begin{lem}
   The image of  $\mathcal{E}(C_{c}(G_{1}),C_{0}(G_{1}^{0}, \mathcal{H}^{\sigma_{1}})) \otimes \mathcal{E}(C_{c}(G_{2}),C_{0}(G_{2}^{0}, \mathcal{H}^{\sigma_{2}}))$ under $\Phi$ is total in $\mathcal{F}^{\sigma_{1}\times \sigma_{2}}(G_{1}\times G_{2})$.
\end{lem}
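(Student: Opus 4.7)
The plan is to reduce the problem to the total set produced by Lemma~$3.7$ for the product groupoid and then approximate each $\mathcal{E}$-function there by elementary tensors, exploiting the identity of Lemma~$5.2$. Explicitly, by Lemma~$3.7$ applied to $G_{1}\times G_{2}$ with the representation $\sigma_{1}\times \sigma_{2}$ of $H_{1}\times H_{2}$, the set
$$ \mathcal{E}\bigl(C_{c}(G_{1}\times G_{2}),\,C_{0}(G_{1}^{0}\times G_{2}^{0},\mathcal{H}^{\sigma_{1}\times\sigma_{2}})\bigr)$$
is total in $\mathcal{F}^{\sigma_{1}\times\sigma_{2}}(G_{1}\times G_{2})$. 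So it suffices to approximate, in the module norm, every $\mathcal{E}(f,t)$ with $f\in C_{c}(G_{1}\times G_{2})$ and $t\in C_{0}(G_{1}^{0}\times G_{2}^{0},\mathcal{H}^{\sigma_{1}\times \sigma_{2}})$ by finite linear combinations of images under $\Phi$ of elementary tensors $\mathcal{E}(f_{1},t_{1})\otimes \mathcal{E}(f_{2},t_{2})$.

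First I would fix a compact neighborhood $U_{1}\times U_{2}\subset G_{1}\times G_{2}$ containing $\mathrm{supp}(f)$ and, by the Stone--Weierstrass theorem, approximate $f$ uniformly, with supports inside $U_{1}\times U_{2}$, by finite sums $\sum_{i} f_{1}^{i}(x_{1})f_{2}^{i}(x_{2})$ where $f_{k}^{i}\in C_{c}(G_{k})$. Next I would approximate $t$ in supremum norm by finite sums $\sum_{j} t_{1}^{j}\otimes t_{2}^{j}$ with $t_{k}^{j}\in C_{0}(G_{k}^{0},\mathcal{H}^{\sigma_{k}})$; density of elementary tensors in $C_{0}(G_{1}^{0}\times G_{2}^{0},\mathcal{H}^{\sigma_{1}}\otimes\mathcal{H}^{\sigma_{2}})$ follows from the isomorphism $C_{0}(G_{1}^{0})\otimes_{*}C_{0}(G_{2}^{0})\cong C_{0}(G_{1}^{0}\times G_{2}^{0})$ already invoked in the section, combined with fiberwise density of simple tensors in $\mathcal{H}^{\sigma_{1}}\otimes\mathcal{H}^{\sigma_{2}}$.

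I would then combine the two approximations and apply Lemma~$5.2$: on each summand $(f_{1}^{i}\otimes f_{2}^{i},\,t_{1}^{j}\otimes t_{2}^{j})$,
\begin{equation*}
\mathcal{E}\bigl(f_{1}^{i}\otimes f_{2}^{i},\,t_{1}^{j}\otimes t_{2}^{j}\bigr)(x_{1},x_{2})
= \mathcal{E}(f_{1}^{i},t_{1}^{j})(x_{1}) \otimes \mathcal{E}(f_{2}^{i},t_{2}^{j})(x_{2})
= \Phi\bigl(\mathcal{E}(f_{1}^{i},t_{1}^{j})\otimes \mathcal{E}(f_{2}^{i},t_{2}^{j})\bigr)(x_{1},x_{2}).
\end{equation*}
Thus the combined approximant lies in $\Phi\bigl(\mathcal{E}(C_{c}(G_{1}),C_{0}(G_{1}^{0},\mathcal{H}^{\sigma_{1}}))\otimes \mathcal{E}(C_{c}(G_{2}),C_{0}(G_{2}^{0},\mathcal{H}^{\sigma_{2}}))\bigr)$. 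Closeness to $\mathcal{E}(f,t)$ in module norm is controlled by two applications of the estimate from Lemma~$3.7$, with $c$ depending only on the fixed compact $U_{1}\times U_{2}$: writing $g=\sum_{i}f_{1}^{i}\otimes f_{2}^{i}$ and $s=\sum_{j}t_{1}^{j}\otimes t_{2}^{j}$, one bounds $\|\mathcal{E}(f,t)-\mathcal{E}(g,s)\|\leq \|\mathcal{E}(f-g,t)\|+\|\mathcal{E}(g,t-s)\|\leq c\,\|f-g\|_{\infty}\|t\|+c\,\|g\|_{\infty}\|t-s\|_{\infty}$, which can be made arbitrarily small.

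The main technical point I expect to be mildly delicate is the density of elementary tensors in $C_{0}(G_{1}^{0}\times G_{2}^{0},\mathcal{H}^{\sigma_{1}}\otimes\mathcal{H}^{\sigma_{2}})$ in the supremum norm; everything else is a clean bilinear Stone--Weierstrass argument combined with the norm estimate of Lemma~$3.7$ and the pointwise tensor identity of Lemma~$5.2$.
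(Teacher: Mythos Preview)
Your proposal is correct and follows essentially the same route as the paper: reduce via Lemma~$3.7$ to approximating $\mathcal{E}(f,t)$, use Lemma~$5.2$ to identify elementary tensors, and control the error with the norm estimate from Lemma~$3.7$. The only cosmetic difference is that the paper first observes (from fibrewise density of $C_{0}(G_{1}^{0},\mathcal{H}^{\sigma_{1}})\otimes C_{0}(G_{2}^{0},\mathcal{H}^{\sigma_{2}})$) that the subset $\{\mathcal{E}(f,t_{1}\otimes t_{2})\}$ is already total, and then approximates only $f$ by Stone--Weierstrass, whereas you approximate $f$ and $t$ simultaneously; both arguments hinge on the same density point you flagged as ``mildly delicate''.
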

\begin{proof}
    By Lemma $3.6$, it is clear that  $\mathcal{E}(f,t)$ where $f\in C_{c}(G_{1}\times G_{2})$ and $t \in C_{0}(G_{1}^{0}\times G_{2}^{0}, \mathcal{H}^{\sigma_{1}}\otimes \mathcal{H}^{\sigma_{2}}) $ is a total subset of $\mathcal{F}^{\sigma_{1}\times \sigma_{2}}(G_{1}\times G_{2})$. Also note that $C_{0}(G_{1}^{0}, \mathcal{H}^{\sigma_{1}}) \otimes C_{0}(G_{2}^{0}, \mathcal{H}^{\sigma_{2}})$ is fibrewise dense in $\mathcal{H}^{\sigma_{1}}\otimes \mathcal{H}^{\sigma_{2}}$. Hence, we can say that  $\mathcal{E}(f,t_{1}\otimes t_{2})$, where $t_{i}\in C_{0}(G_{i}^{0}, \mathcal{H}^{i}),i=1,2,$ is a total subset in $\mathcal{F}^{\sigma_{1}\times \sigma_{2}}(G_{1}\times G_{2})$.
    So, it is enough to show that $\mathcal{E}(f,t_{1}\otimes t_{2})$, can be approximated by linear combination of $\Phi(\mathcal{E}(g_{1},t_{1}) \otimes \mathcal{E}(g_{2},t_{2})), g_{i} \in C_{c}(G_{i}),i=1,2$, in $\mathcal{F}^{\sigma_{1}\times \sigma_{2}}(G_{1}\times G_{2})$.

     Now, let $V_{i},i=1,2$, be two relatively compact subsets of $G_{i}, f\in C_{c}(G_{1}\times G_{2})$ with $supp(f)\subset V_{1}\times V_{2}$. Then, by Lemma $3.6$,
      \begin{equation}
          \|\mathcal{E}(g,t)\|\leq c \|g\|_{\infty}\|t\|
      \end{equation} for some $c>0$ and  for all $t \in C_{0}(G_{1}^{0}\times G_{2}^{0}, \mathcal{H}^{\sigma_{1}}\times \mathcal{H}^{\sigma_{2}})$ and $g\in C_{c}(G_{1}\times G_{2})$ with $ supp(g)\subset \Bar{V}_{1} \times \Bar{V}_{2}$.

      Given $\epsilon >0$, by Stone-Weierstrass theorem, there exist $f_{ij}\in C_{c}(G_{i}),i=1,2,j=1,...,n$ such that $supp(f_{ij})\subset V_{i}$ and
      $$ |f(x_{1},x_{2})-\sum\limits_{j=1}^{n}f_{1j}(x_1)f_{2j}(x_{2})|<\epsilon$$ for all $(x_{1},x_{2})\in G_{1}\times G_{2}$. Let $f_{j}(x_{1},x_{2})=f_{1j}(x_{1})f_{2j}(x_{2})$ and by Lemma $5.2$,
      \begin{align*}
          \sum\limits_{j=1}^{n}\Phi(\mathcal{E}(f_{1j},t_{1}) \otimes \mathcal{E}(f_{2j},t_{2}))(x_{1},x_{2})&=\sum\limits_{j=1}^{n}\mathcal{E}(f_{1j},t_{1})(x_{1})\otimes \mathcal{E}(f_{2j},t_{2})(x_{2})\\
          &= \sum\limits_{j=1}^{n} \mathcal{E}(f_{j},t_{1}\otimes t_{2})(x_{1},x_{2}).
      \end{align*}
      This implies that for $(x_{1},x_{2})\in (G_{1}\times G_{2})$, by calculation
      $$\mathcal{E}(f,t_{1}\otimes t_{2})-  \sum\limits_{j=1}^{n}\Phi(\mathcal{E}(f_{1j},t_{1}) \otimes \mathcal{E}(f_{2j},t_{2}))(x_{1},x_{2}) = \mathcal{E}\Big(f- \sum\limits_{j=1}^{n}f_{1j}f_{2j},t_{1}\otimes t_{2}\Big)(x_{1},x_{2}).$$
     Hence, using $(2)$, we have
      \begin{align*}
          & \|\mathcal{E}(f,t_{1}\otimes t_{2})-  \sum\limits_{j=1}^{n}\Phi(\mathcal{E}(f_{1j},t_{1}) \otimes \mathcal{E}(f_{2j},t_{2}))\| \\ = & \|\mathcal{E}\Big(f- \sum\limits_{j=1}^{n}f_{1j}f_{2j},t_{1}\otimes t_{2}\Big)\|
           \leq c \|t_{1}\|\|t_{2}\| \|f-\sum\limits_{j=1}^{n}f_{1j}f_{2j}\|_{\infty}
          \leq \epsilon ~c \|t_{1}\|\|t_{2}\|. \qedhere
      \end{align*}
\end{proof}

With this series of Lemmas, we can conclude that $\Phi$ is a surjective isometric $C_{0}(G_{1}^{0}\times G_{2}^{0})$-linear map from $\mathcal{F}^{\sigma_{1}}(G_{1})\otimes \mathcal{F}^{\sigma_{2}}(G_{2})$ to $\mathcal{F}^{\sigma_{1}\times \sigma_{2}}(G_{1}\times G_{2})$. Then by \cite[Theorem $3.5$]{lance1995hilbert}, Hilbert modules $\mathcal{F}^{\sigma_{1}}(G_{1})\otimes \mathcal{F}^{\sigma_{2}}(G_{2})$ and $\mathcal{F}^{\sigma_{1}\times \sigma_{2}}(G_{1}\times G_{2})$ are unitarily equivalent. Thus, by \cite[Theorem $4.2.4$]{bos2007groupoids}, $\Bar{\mathcal{F}}_{G_{1}}^{\sigma_{1}}\otimes \Bar{\mathcal{F}}_{G_{2}}^{\sigma_{2}}$ and $\Bar{\mathcal{F}}_{G_{1}\times G_{2}}^{\sigma_{1}\times \sigma_{2}}$ are isometrically isometric Hilbert bundles.

Now, we prove Mackey's tensor product theorem on locally compact groupoids. 
\begin{thm}
    Let $G_{1}$ and $G_{2}$ be two locally compact groupoids with closed wide subgroupoids $H_{1}$ and $H_{2}$. Then for any representations $\sigma_{1}$ of $H_{1}$ and $\sigma_{2}$ of $H_{2}$, the representations $\left(ind_{H_{1}}^{G_{1}}\sigma_{1},\mu_{1}\right) \times \left(ind_{H_{2}}^{G_{2}}\sigma_{2},\mu_{2} \right)$ and $\left(ind_{H_{1}\times H_{2}}^{G_{1}\times G_{2}}(\sigma_{1}\times \sigma_{2}),\mu_{1}\times \mu_{2}\right)$ are equivalent.
\end{thm}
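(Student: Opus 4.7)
The plan is to leverage all the work done in Lemmas $5.1$--$5.3$ and the concluding discussion, which together produce a surjective isometric $C_{0}(G_{1}^{0}\times G_{2}^{0})$-linear map $\Phi$ whose associated bundle map is a fibrewise unitary isomorphism between $\Bar{\mathcal{F}}_{G_{1}}^{\sigma_{1}}\otimes \Bar{\mathcal{F}}_{G_{2}}^{\sigma_{2}}$ and $\Bar{\mathcal{F}}_{G_{1}\times G_{2}}^{\sigma_{1}\times \sigma_{2}}$. What remains is to verify that this bundle isomorphism intertwines the two representations of $G_{1}\times G_{2}$; once the intertwining holds, combined with the fibrewise unitarity already in hand, the definition of unitary equivalence is met.

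First I would check the intertwining on elementary tensors at the fibre level. Fix $(x_{1}, x_{2}) \in G_{1}\times G_{2}$ and $f_{i} \in \mathcal{F}_{0}^{\sigma_{i}}(G_{i})$, and write $L_{i} = ind_{H_{i}}^{G_{i}}(\sigma_{i})$ and $L = ind_{H_{1}\times H_{2}}^{G_{1}\times G_{2}}(\sigma_{1}\times \sigma_{2})$. Using the definition of $\Phi$ together with the left-regular formula from Proposition $3.6$, for any point $(y_{1}, y_{2})$ in the fibre over $(r(x_{1}), r(x_{2}))$,
\[
\Phi\bigl((L_{1}(x_{1})\otimes L_{2}(x_{2}))(f_{1}\otimes f_{2})\bigr)(y_{1}, y_{2}) = (L_{1}(x_{1}) f_{1})(y_{1}) \otimes (L_{2}(x_{2}) f_{2})(y_{2}) = f_{1}(x_{1}^{-1} y_{1}) \otimes f_{2}(x_{2}^{-1} y_{2}).
\]
On the other hand, since the inverse and product in $G_{1}\times G_{2}$ are componentwise,
\[
L(x_{1}, x_{2})\,\Phi(f_{1}\otimes f_{2})(y_{1}, y_{2}) = \Phi(f_{1}\otimes f_{2})(x_{1}^{-1} y_{1}, x_{2}^{-1} y_{2}) = f_{1}(x_{1}^{-1} y_{1}) \otimes f_{2}(x_{2}^{-1} y_{2}).
\]
The two expressions coincide, so $\Phi$ intertwines on elementary tensors.

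Next I would extend by density. Linear combinations of elementary tensors $f_{1}\otimes f_{2}$ with $f_{i}\in \mathcal{F}_{0}^{\sigma_{i}}(G_{i})$ are dense in $\mathcal{F}^{\sigma_{1}}(G_{1})\otimes \mathcal{F}^{\sigma_{2}}(G_{2})$, each $L_{i}(x_{i})$ as well as $L(x_{1}, x_{2})$ is a fibrewise unitary, and $\Phi$ is an isometric module map. The intertwining identity above therefore propagates to all of $\mathcal{F}^{\sigma_{1}}(G_{1})\otimes \mathcal{F}^{\sigma_{2}}(G_{2})$ and, via the transfer principle of \cite[Theorem $4.2.4$]{bos2007groupoids}, to every fibre of the associated continuous Hilbert bundle. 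This yields
\[
\Phi_{(r(x_{1}), r(x_{2}))} \circ (L_{1}(x_{1})\otimes L_{2}(x_{2})) = L(x_{1}, x_{2}) \circ \Phi_{(d(x_{1}), d(x_{2}))},
\]
which together with the fibrewise unitarity of $\Phi$ is exactly the unitary equivalence asserted by the theorem.

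The only real obstacle is bookkeeping: verifying that the fibre of $\Bar{\mathcal{F}}_{G_{1}}^{\sigma_{1}}\otimes \Bar{\mathcal{F}}_{G_{2}}^{\sigma_{2}}$ at $(u, v)$ really is the Hilbert space tensor product $\mathcal{F}^{u}\otimes \mathcal{F}^{v}$ on which the outer tensor product $L_{1}(x_{1})\otimes L_{2}(x_{2})$ acts by the expected prescription, and that the fibre of $\Bar{\mathcal{F}}_{G_{1}\times G_{2}}^{\sigma_{1}\times \sigma_{2}}$ carries the induced representation $L(x_{1}, x_{2})$ by the left-regular formula of Proposition $3.6$. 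Both compatibilities are built into the definitions of the outer tensor product and the tensor product of continuous fields of Hilbert spaces, so on elementary tensors the intertwining reduces to the componentwise computation displayed above, and the density/continuity extension finishes the argument.
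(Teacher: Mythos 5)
Your proposal is correct and follows essentially the same route as the paper: both rely on Lemmas $5.1$--$5.3$ to get the surjective isometric module map $\Phi$ and the resulting fibrewise unitary bundle isomorphism, and then verify the intertwining relation by the same componentwise computation on elementary tensors $f_{1}\otimes f_{2}$. Your explicit density extension and the remark on identifying fibres are just the steps the paper leaves implicit in ``From previous lemmas and results, it is enough to show\dots''.
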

\begin{proof}
    From previous lemmas and results, it is enough to show that the morphism induced by $\Phi$, again denoted as $\Phi$, from $\Bar{\mathcal{F}}_{G_{1}}^{\sigma_{1}}\otimes \Bar{\mathcal{F}}_{G_{2}}^{\sigma_{2}}$ to $\Bar{\mathcal{F}}^{\sigma_{1}\times \sigma_{2}}_{G_{1}\times G_{2}}$ intertwines with $\left(ind_{H_{1}}^{G_{1}}\sigma_{1},\mu_{1}\right) \times \left(ind_{H_{2}}^{G_{2}}\sigma_{2},\mu_{2} \right)$ and $\left(ind_{H_{1}\times H_{2}}^{G_{1}\times G_{2}}(\sigma_{1}\times \sigma_{2}),\mu_{1}\times \mu_{2}\right)$.

    Let $(x_{1},x_{2}), (y_{1},y_{2})\in G_{1}\times G_{2}$, $f_{1}\otimes f_{2}\in \mathcal{F}_{0}^{\sigma_{1}}\otimes \mathcal{F}_{0}^{\sigma_{2}}$, then
    \begin{align*}
       & \Big(ind_{H_{1}\times H_{2}}^{G_{1}\times G_{2}}(\sigma_{1}\times \sigma_{2})(x_{1},x_{2})\Phi(f_{1}\otimes f_{2})\Big)(y_{1},y_{2}) \\ = &\Phi(f_{1}\otimes f_{2})(x_{1}^{-1}y_{1},x_{2}^{-1}y_{2}) = f_{1}(x_{1}^{-1}y_{1})\otimes f_{2}(x_{2}^{-1}y_{2})\\
       = & (ind_{H_{1}}^{G_{1}}\sigma_{1}(x_{1})f_{1})(y_{1})\otimes (ind_{H_{2}}^{G_{2}}\sigma_{2}(x_{2})f_{2})(y_{2})\\
       = & \Phi\Big((ind_{H_{1}}^{G_{1}}\sigma_{1}(x_{1})f_{1})\otimes (ind_{H_{2}}^{G_{2}}\sigma_{2}(x_{2})f_{2})\Big)(y_{1},y_{2})\\
       = & \Phi\Big((ind_{H_{1}}^{G_{1}}\sigma_{1} \times ind_{H_{2}}^{G_{2}}\sigma_{2})(x_{1},x_{2})(f_{1}\otimes f_{2})\Big)(y_{1},y_{2}).\qedhere
    \end{align*}
\end{proof}
\begin{section}{Frobenius Reciprocity}
In this section, we prove the Frobenius reciprocity theorem on compact transitive groupoids. Let $\Sigma_{0}$ be the space of closed subgroups of $G$ viewed as a subset of the collection of closed subsets of $G$ under the Fell topology. One can define a continuous function $p:\Sigma_{0}\to G^{0}$, given by $p(H)=u$ if $H\subset G^{u}_{u}$. Define $G*\Sigma_{0}=\{(g,H)\in G\times \Sigma_{0}:d(g)=p(H)\}$. Then $G$ acts continuously on $\Sigma_{0}:$ If $(g,H)\in G*\Sigma_{0}$,~ $g\cdot H= \{gtg^{-1}:t\in H\}$. For more details refer \cite[Section $3.4$]{williams2019tool}.

Suppose $G$ is a compact transitive groupoid  and $H$ a closed wide subgroupoid with a normalised haarsystem $\{\lambda_{H}^{u}\}_{u\in G^{0}}$, the existence of a full equivariant system of measure $\{\mu^{u}\}_{u\in G^{0}}$ on $G/H$ is guaranteed by \cite[Proposition $2.5$]{MR3478528}. Note that the family of  measures $\{\lambda^{u}\}_{u\in G^{0}}$ defined in $(1)$ form a haar system on $G$. 
  Since $G$ is compact transitive groupoid, by \cite[Proposition $3.18$]{edeko2022uniform} and \cite[Lemma $1.3$]{renault1991ideal}, a family of haar measures $\{\beta^{u}_{u}\}_{u\in G^{0}}$ form a haar system on the isotropy subgroupoid $G'$. 

  By \cite[Proposition $6.15$]{williams2019tool}, one can write 
  \begin{equation}
      \lambda^{u}(f)= \int_{R}\int_{G^{u}_{v}}f(t)d\beta^{u}_{v}(t)dm^{u}(v)
  \end{equation}  where $R$ is the range of $(r,s):G\to G^{0}\times G^{0}, (r,s)(x)=(r(x),s(x))$  which forms a topological groupoid under quotient topology and $\{m^{u}\}_{u\in G^{0}}$ is the Haarsystem for $R$.

   The following is a groupoid version of the Frobenius Reciprocity theorem.
    \begin{thm}
        Let $G$ be a transitive compact groupoid, $H$ a closed transitive wide subgroupoid with a normalized Haarsystem  $\{\lambda_{H}^{u}\}_{u\in G^{0}}$ and $\{\mu^{u}\}_{u\in G^{0}}$ be a full equivariant measure system on $G/H$. If $\pi$ is an irreducible representation of $G$, and $\sigma$ an irreducible representation of $H$, then,
        $$Mor\left(\pi,\left(ind_{H}^{G}(\sigma),\mu \right)\right)\cong Mor(\pi_{|_{H}},\sigma).$$
    \end{thm}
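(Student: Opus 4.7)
The plan is to exhibit two mutually inverse linear maps between $Mor(\pi,(ind_{H}^{G}(\sigma),\mu))$ and $Mor(\pi|_{H},\sigma)$, following the template of the classical Frobenius reciprocity for compact groups. The ``easy'' direction sends $S\in Mor(\pi|_{H},\sigma)$ to the morphism $\Phi(S)$ defined fibrewise by $(\Phi(S)_{u}v)(x)=S_{d(x)}\pi(x^{-1})v$ for $x\in G^{u}$ and $v\in \mathcal{H}^{\pi}_{u}$, and the reverse direction sends $T\in Mor(\pi,(ind_{H}^{G}(\sigma),\mu))$ to its \emph{value at the unit}, $\Psi(T)_{u}(v)=(T_{u}v)(u)$.

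For the map $\Phi$, the verifications are routine and mirror the group case. The $H$-equivariance $(\Phi(S)_{u}v)(x\eta)=\sigma(\eta^{-1})(\Phi(S)_{u}v)(x)$ follows from $S\in Mor(\pi|_{H},\sigma)$; continuity of $x\mapsto S_{d(x)}\pi(x^{-1})v$ follows from continuity of $S$ and $\pi$; and membership in $\mathcal{F}^{u}$ reduces to a finite $L^{2}$-norm estimate, using that $G^{0}$ is compact (so $u\mapsto \|S_{u}\|_{\mathrm{op}}$ is bounded) together with $\mu^{u}(G/H)<\infty$. The intertwining $\Phi(S)_{r(x_{0})}\pi(x_{0})=L(x_{0})\Phi(S)_{d(x_{0})}$ is then a one-line calculation using only the cocycle identity for $\pi$, and continuity of $\Phi(S)$ as a bundle morphism can be verified on the dense generators supplied by Lemma $3.7$.

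The reverse map $\Psi$ is the delicate one, since a priori elements of the fibre $\mathcal{F}^{u}$ are only abstract classes in an $L^{2}$-type completion and admit no obvious pointwise evaluation. The key input is that $\pi$ is irreducible on the compact transitive groupoid $G$: via the correspondence of irreducible representations of $G$ with irreducible representations of the (compact) isotropy group $G_{u}^{u}$, each fibre $\mathcal{H}^{\pi}_{u}$ is finite dimensional, and hence $T_{u}(\mathcal{H}^{\pi}_{u})$ is a finite dimensional subspace of $\mathcal{F}^{u}$. The main technical step is to show that this finite dimensional subspace in fact lies in the image of $\{f|_{G^{u}}:f\in \mathcal{F}_{0}^{\sigma}(G)\}$, i.e.\ is represented by honest continuous equivariant functions on $G^{u}$. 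I would do this by approximating a fixed basis in the $\mathcal{F}^{u}$-norm by continuous equivariant functions and then exploiting the intertwining $T_{r(\eta)}\pi(\eta)=L(\eta)T_{d(\eta)}$ for $\eta\in G_{u}^{u}$ together with a continuous approximate unit supported near $u$ to promote norm convergence in the finite dimensional image of $T_{u}$ to uniform convergence, yielding a continuous representative. Once pointwise evaluation is available, the $H$-intertwining $\Psi(T)_{r(\eta)}\pi(\eta)=\sigma(\eta)\Psi(T)_{d(\eta)}$ follows by combining the $T$-intertwining relation with the equivariance property of $T_{d(\eta)}v\in \mathcal{F}^{d(\eta)}$, applied with $x=d(\eta)$ and $\xi=\eta^{-1}$.

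Finally the two maps are mutual inverses by direct computation. For $\Psi\circ\Phi=\mathrm{id}$ one has $(\Phi(S)_{u}v)(u)=S_{u}\pi(u)v=S_{u}v$, and for $\Phi\circ\Psi=\mathrm{id}$ one rewrites $\Psi(T)_{d(x)}(\pi(x^{-1})v)=(T_{d(x)}\pi(x^{-1})v)(d(x))$ and uses the $T$-intertwining $T_{d(x)}\pi(x^{-1})=L(x^{-1})T_{u}$ together with $(L(x^{-1})f)(d(x))=f(x)$ to conclude $(\Phi(\Psi(T))_{u}v)(x)=(T_{u}v)(x)$ as elements of $\mathcal{F}^{u}$. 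The main obstacle is unambiguously the pointwise-evaluation step in the construction of $\Psi$: everything else is a careful but mechanical translation of the compact group argument, whereas extracting continuous representatives from finite dimensional subspaces of the abstract completion $\mathcal{F}^{u}$ genuinely requires compactness of $G$ and the finite dimensionality forced by irreducibility of $\pi$.
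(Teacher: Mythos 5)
Your two maps are exactly the ones the paper uses: $S\mapsto\Phi(S)$ with $(\Phi(S)_{u}v)(x)=S_{d(x)}\pi(x^{-1})v$, and $T\mapsto\Psi(T)$ given by evaluation at the unit. The verifications you call routine (equivariance, the intertwining identities, and the mutual-inverse computations) do match the paper's, and you correctly identify the one genuinely delicate point: making pointwise evaluation of $T_{u}v\in\mathcal{F}^{u}$ meaningful. Where you diverge is in how that point is handled, and your sketch of it has a gap. The paper first proves a comparison of measures, $\int_{G^{u}_{u}}\|f(x)\|^{2}\,d\beta^{u}_{u}(x)\le K\int_{G}\|f(x)\|^{2}\,d\lambda^{u}(x)$, so that elements of $\mathcal{F}^{u}$ restrict to $L^{2}$ of the compact isotropy group $G^{u}_{u}$; it then uses that $\pi|_{G^{u}_{u}}$ is irreducible and invokes Peter--Weyl to place $T_{u}v|_{G^{u}_{u}}$ inside finitely many copies of the span of matrix coefficients of $\bar\pi$, which are continuous, and finally spreads continuity over all of $G^{u}$ by transitivity of $H$. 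Your alternative --- finite dimensionality of $T_{u}(\mathcal{H}^{\pi}_{u})$ plus an approximate unit --- can be made to work, but not as you state it: norm convergence of continuous approximants to a basis of a finite-dimensional subspace of an $L^{2}$-completion does \emph{not} upgrade to uniform convergence, because the approximants need not lie in that subspace. The step that actually closes this is convolution over $G^{u}_{u}$ against a continuous approximate identity: the subspace $T_{u}(\mathcal{H}^{\pi}_{u})$ is invariant under left translation by $G^{u}_{u}$ (this is the intertwining relation), convolution maps $L^{2}$ into continuous functions, and on a finite-dimensional invariant subspace convolution against a sufficiently concentrated kernel is invertible; this is a soft form of the Peter--Weyl argument the paper uses, and it still requires the $\beta^{u}_{u}$-versus-$\lambda^{u}$ comparison that your proposal omits entirely.

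Two smaller omissions. First, membership of $\Phi(S)_{u}v$ in $\mathcal{F}^{u}$ is not just a finite-norm statement, since $\mathcal{F}^{u}$ is defined as a completion of restrictions from $\mathcal{F}^{\sigma}_{0}(G)$; one should note that a continuous $H$-equivariant function on $G^{u}$ extends to an element of $\mathcal{F}^{\sigma}_{0}(G)$ (Lemma~3.5 and Proposition~3.2, as in the proof of Proposition~3.6). Second, you check continuity of $\Phi(S)$ as a bundle morphism but say nothing about continuity of $u\mapsto\Psi(T)_{u}\xi(u)$; the paper handles this with a short argument using transitivity of $H$ and compactness. Neither of these is fatal, but both need to be said.
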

    \begin{proof}
         Note that, $ \beta^{u}_{v}= x\beta^{v}_{v}= \delta(x, G^{v}_{v}) \beta^{u}_{u}x,~ x\in G^{u}_{v}$, where $\delta$ is a positive continuous function on $G*\Sigma_{0}$ as defined in  \cite[Lemma $3.25$]{williams2019tool}. Since each isotropy subgroup is compact, one can easily see that $\delta$ is constant on $G^{u}_{v}$ and $\beta'^{u}_{v}= \beta^{u}_{u}x$ is independent of $x\in G^{u}_{v}$. Thus, we can see that $\beta^{u}_{v}= \delta'(v)\beta'^{u}_{v}$, where $\delta'$ is a positive continuous function on $G^{u}$ defined as $\delta'(v)=\delta(x, G^{v}_{v})$ for some $x\in G^{u}_{v}$. Suppose $f \in \mathcal{F}_{0}^{\sigma}(G)$, $u\in G^{0}$, we can see that $\int_{G^{u}_{u}} \langle f(x),f(x)\rangle_{\mathcal{H}^{\sigma}_{u}}d\beta_{u}^{u}(x) < \infty$ and  \begin{align*}
            \int \|f(x)\|^{2}_{\mathcal{H}^{\sigma}_{v}}d\beta'^{u}_{v}(x)=\int \|f(xh)\|^{2}_{\mathcal{H}^{\sigma}_{v}}d\beta^{u}_{u}(x)=\int \|f(x)\|^{2}_{\mathcal{H}^{\sigma}_{u}}d\beta^{u}_{u}(x),~~h\in H^{u}_{v}.
        \end{align*} Thus by $(3)$, and above discussions, for each $u\in G^{0}$, we get 
        $$\int_{G} \langle f(x),f(x)\rangle_{\mathcal{H}^{\sigma}_{u}}d\beta_{u}^{u}(x)= K^{-1} \int_{G} \langle f(x),f(x)\rangle_{\mathcal{H}^{\sigma}_{d(x)}}d\lambda^{u}(x),$$
where $K= \int_{G^{0}}\delta'(v)dm^{u}(v)$ and the family of  measures $\{\lambda^{u}\}_{u\in G^{0}}$ is the haarsystem defined in $(1)$.
        By \cite[Lemma $4.10$]{edeko2022uniform},  $\pi_{|_{G^{u}_{u}}}$ is irreducible for every $u \in G^{0}$. Let $\mathcal{E}^{\Bar{\pi}}_{u,u}$ denotes the linear span of matrix elements of $\Bar{\pi}$ on $G^{u}_{u}$. Now, suppose $T\in Mor(\pi,\left(ind_{H}^{G}(\sigma),\mu \right))$, then by Peter-Weyl theorem, for $v\in \mathcal{H}^{\pi}_{u}$, $T_{u}v_{|_{G^{u}_{u}}}$ is contained in the $d_{\sigma}^{u}$ copies of  $\mathcal{E}^{\Bar{\pi}}_{u,u}$, which are continuous on $G^{u}_{u}$, where $d_{\sigma}^{u}$ is the dimension of $\mathcal{H}^{\sigma}_{u}$. Thus, it makes sense to evaluate $T_{u}v$ pointwise at $G^{u}_{u}$ for every $u\in G^{0}$. Also, by transitivity of $H$,  $T_{u}v \in C(G^{u},\mathcal{H}^{\sigma})$ for every $u\in G^{0}$.

        Let $E_{u}:C(G^{u},\mathcal{H}^{\sigma}) \to \mathcal{H}^{\sigma}_{u}$ be the evaluation map $E_{u}f=f(u)$. Then, let $(ET)_{u}=E_{u}T_{u}$ and for $u,w \in G^{0}, v\in \mathcal{H}^{\pi}_{u}, h\in H\cap G^{w}_{u}$
        \begin{align*}
            \sigma(h)(ET)_{u}v &= \sigma(h)[T_{u}v(u)]=T_{u}v(uh^{-1})\\ = & T_{u}v(h^{-1}w) =[(ind_{H}^{G}\sigma)(h)T_{u}v](w) \\ = & [T_{w}\pi(h)v](w)= (ET)_{w}\pi(h)v.
        \end{align*} Fix $u_{0} \in G^{0}$. Let $\eta \in C_{0}(H^{0},\mathcal{H}^{\sigma})$ and $\xi \in C_{0}(G^{0},\mathcal{H}^{\pi})$, then note that $$\Big\langle[(ET)\xi](u),\eta (u)\Big\rangle= \Big\langle \sigma(h)[(ET)\xi](u),\sigma(h) \eta(u)\Big\rangle = \Big\langle (ET)_{u_{0}}\pi(h)\xi(u), \sigma(h) \eta(u)\Big\rangle$$ and $\|(ET)\xi(u)\|=\|(ET)_{u_{0}}\pi(h)\xi(u)\|$ for some $h \in H\cap G^{u_{0}}_{u}$. Using transitivity of $H$ and compactness argument, it is easy to see that $[(ET)\xi](u)=[T_{u}\xi^{u}](u)$ is continuous as a section.
        So, $ET \in Mor(\pi_{|_{H}},\sigma)$. 
        
        Now, if $ET=0$, then for any $u,w \in G^{0}, x\in G^{w}_{u},v\in \mathcal{H}^{\pi}_{w}$,
        $$ 0=[T_{u}\pi(x^{-1})v](u)=[ind_{H}^{G}\sigma(x^{-1})T_{w}v](u)=T_{w}v(x),$$
        so $T=0$. Thus, $T\to ET$ is injective. Next, we prove that the map is surjective also.  

        If $S\in Mor(\pi_{|_{H}},\sigma) $, define $T: \mathcal{H}^{\pi}\to \Bar{\mathcal{F}}^{\sigma}$ as $[T_{u}v](x)=S_{d(x)}[\pi(x^{-1})v]$ for every $x\in G^{u},v \in \mathcal{H}^{\pi}_{u},u\in G^{0}$.
        \begin{align*}
          [T_{u}v](xh)= S_{d(h)}[\pi(h^{-1})\pi(x^{-1})v]=\sigma(h^{-1})S_{r(h)}[\pi(x^{-1})v] =  \sigma(h^{-1})[T_{u}v](x).
        \end{align*}
        Note that, $T_{u}v \in C(G^{u},\mathcal{H}^{\sigma})$. Also $T\xi \in \mathcal{F}^{\sigma}(G)$ for $\xi \in C_{0}(G^{0},\mathcal{H}^{\pi})$. 
        
         For $y \in G, v\in \mathcal{H}^{\pi}_{d(y)}$,
        \begin{align*}
            [ind_{H}^{G}(\sigma)(y)T_{d(y)}v](x)=[T_{d(y)}v](y^{-1}x)=S_{d(x)}[\pi(x^{-1})\pi(y)v]=T_{r(y)}[\pi(y)v](x).
        \end{align*}
        Hence, $T\in Mor(\pi,\left(ind_{H}^{G}(\sigma),\mu \right)) $ and for $u\in G^{0}$, $(ET)_{u}v= [T_{u}v](u)= S_{u}v$. So, $S=ET$. \qedhere
        
    \end{proof}
    \begin{rem}
        Suppose $G$ is a compact groupoid with finite unit space $G^{0}$. Let $H$ be a closed wide subgroupoid of $G$ such that $H\cap G^{u}_{v}\neq \phi$ , whenever $G^{u}_{v}\neq \phi$. Then the Frobenius Reciprocity theorem can be proved similarly as above for internally irreducible representations(see\cite[Section $4$, Defintion $13$]{bos2011continuous}) $\pi$ of $G$ and $\sigma$ of $H$.
    \end{rem}
\end{section}

\section*{Acknowledgement}
K. N. Sridharan is supported by the NBHM doctoral fellowship with Ref. number: 0203/13(45)/2021-R\&D-II/13173.  
 \bibliographystyle{abbrv}
\bibliography{reference}
\end{document}